\newtheorem{lemma}{Lemma}[section]
\newtheorem{thm}{Theorem}
\newtheorem{cor}[lemma]{Corollary}
\newtheorem{prop}[lemma]{Proposition}
 \newtheorem{rmk}[lemma]{Remark}
\DeclareMathOperator*{\im}{im} 
\DeclareMathOperator*{\rank}{rank} 
\DeclareMathOperator*{\Spec}{Spec}
\DeclareMathOperator*{\DR}{DR }
\DeclareMathOperator*{\Char}{char}
\newcommand{\cX}{\mathcal{X}}
\newcommand{\cD}{\mathcal{D}}
\newcommand{\cE}{\mathcal{E}} 
\newcommand{\cL}{\mathcal{L}}
\newcommand {\V} {\mathcal{V}}
\newcommand {\N} {{\mathbb N}}
\newcommand {\C} {{\mathbb C}}
\newcommand {\R} {{\mathbb R}}
\newcommand {\Z} {{\mathbb Z}}
\newcommand {\Q} {{\mathbb Q}}
\newcommand {\E} {{\mathcal E}}
\newcommand {\dt} {{\bullet}}
\newcommand {\OO} {{\mathcal O}}
\begin{document}
\title[Kodaira-Saito vanishing via Higgs bundles]{Kodaira-Saito vanishing via Higgs bundles in positive characteristic }
\author{
        Donu Arapura    
}
 \thanks {Partially supported by the NSF }
\address{Department of Mathematics\\
 Purdue University\\
 West Lafayette, IN 47907\\
U.S.A.}
\email{arapura@math.purdue.edu}
 \maketitle

 \begin{abstract}
  The goal of this paper is to give a new  proof of a special case of
  the Kodaira-Saito vanishing theorem for a variation of Hodge
  structure on the complement of a divisor with normal crossings. The
  proof does not use the theory of mixed Hodge modules, but instead
  reduces it to a more general vanishing theorem for semistable nilpotent
  Higgs bundles, which is then proved by using some facts about Higgs bundles in positive characteristic.
 \end{abstract}

In 1990, Saito \cite[prop 2.33]{saito} gave a far reaching  generalization of
Kodaira's vanishing theorem using his theory of mixed Hodge modules.
A number of interesting applications have been found in recent years; we
 refer to  Popa's survey \cite{popa} for a discussion of these, and to
 \cite{popa1, schnell} for a discussion of  the theorem itself.  We
 would like  to explain  the  easiest, but still
important, case of the theorem  where the mixed Hodge module is of the form $\R j_*
\V$ (in standard ``non perverse'' notation), where $\V$ is a polarized variation of pure 
Hodge structure on the complement $j:U\to X$ of a divisor with simple normal crossings
$D$. 
Let us also assume that $\V$ has unipotent local monodromies around
components of $D$.  By Deligne \cite{deligne}, the flat vector bundle
$\OO_U\otimes \V$ has a canonical extension  $V$  such 
that the original connection extends to a logarithmic connection
$$\nabla:V\to \Omega_X^1(\log D)\otimes V$$
with nilpotent residues.
By a theorem of Schmid \cite{schmid}, $V$ has a
filtration $F$ by subbundles extending
the Hodge filtration.  This induces a filtration on the de Rham
complex
$$\DR(V,\nabla)=  V \stackrel{\nabla}{\to}\Omega_X^1(\log{D})\otimes V
\stackrel{\nabla}{\to}\Omega_X^2(\log{D})\otimes V\ldots$$
which, for us, starts in degree $0$.
Saito's  theorem tell us that if
$L$ is an ample line bundle and $i>\dim X$, then
$$H^i(X, Gr^\dt_F\DR(V,\nabla)\otimes L)=0$$
More generally, this holds when $\V$ is replaced by an admissible variation of mixed
Hodge structure.

The first goal of this paper is to give a short  proof of this
special case by reduction to characteristic $p>0$. Illusie \cite{illusie} had previously  given a proof  by
such a reduction, when
$V$ arises geometrically from a semistable map of
varieties.  Our proof, however,  is different and it works even for
nongeometric cases (and this seems important for certain
applications, e.g. to Shimura varieties \cite{suh}). We  first replace
the variation of Hodge structure $\V$ by  the vector bundle
$E=Gr_F(V)$, together with the so called Higgs field
$\theta = Gr_F(\nabla)$.
 By work of Simpson, this pair is semistable. 
In addition, the rational Chern classes $c_i(E)=0$, and $\theta$
is nilpotent. In fact, our main result is a vanishing theorem for
 Higgs bundles $(E,\theta)$ satisfying these conditions, regardless of whether or not
 they arise from variations of Hodge structure. We show that
$$H^i(X, \DR(E,\theta)\otimes L)=0$$
for $i>\dim X$, where $\DR(E,\theta)$ is the complex
$\Omega_X^\dt(\log D)\otimes E$ with $\theta$ as the differential.
In order to prove this, we reduce to where the ground field is the algebraic
closure $\overline{\mathbb{F}}_p$ of a finite field with large  characteristic.
We can now avail ourselves of the theory of Higgs bundles in
characteristic $p$ initiated by Ogus and Vologodsky \cite{ov}, and extended to the log setting by
Schepler \cite{schepler}. When combined with work of Langer
\cite{langerD}, we get an
operation $B$ from the class of Higgs bundles, satisfying the previous
conditions, to itself
that satisfies a ``bootstrapping''
inequality 
$$\dim H^i(\DR(E,\theta)\otimes L)\le 
\dim H^i(\DR(B(E,\theta))\otimes L^p)$$
By iterating $B$, we get a sequence of Higgs
bundles $(E_j,\theta_j)=B^j(E,\theta)$.
(Such sequences were first considered -- for different reasons  --  by
Lan, Sheng and  Zuo \cite{lsz1}; they call them Higgs-de Rham sequences.)
 Langer \cite{langerI}, and also Lan, Shen, Yang, Zuo \cite{lsyz}, show  that this sequence is
eventually periodic, and this is the place where we need to work over
$\overline{\mathbb{F}}_p$. 
 These facts together with  Serre vanishing implies the theorem.
Replacing   Serre vanishing by methods  developed by the author \cite{arapura} yields  a
stronger result. Suppose $M$ is a vector bundle such that for some
integer $m>0$
and some effective divisor $D'$ supported on $D$ with coefficients
less than $m$, $S^m(M)(-D')$ is ample; or more succinctly, suppose
that $M(- \Delta)$ is ample  for some fractional $\Q$-divisor $\Delta$ supported on $D$  (e.g. $\Delta=\frac{1}{m}D'$).
Then
$$H^i(X, Gr_F\DR(V,\nabla)\otimes M)=0$$
for $i\ge\dim X+\rank M$. When $M$ is a line bundle, this due to Suh
\cite{suh}, who deduced it from Saito's theory; also see \cite{wu}. 

As an application of the first vanishing theorem, we prove a Fujita
type result  that given a complex variation of Hodge structure $\V$ with the
same assumptions as above,
$\wedge^a Gr^b_F(V)\otimes \det(F^{b+1})$ is numerically semipositve for every $a$
and $b$ (we
define $\det 0 =\OO_X$).  From the strengthened form of the vanishing
theorem stated at the end of the previous paragraph,
  we deduce an extension of the Koll\'ar-Saito vanishing
theorem. Suppose that $Y$ is an arbitrary complex projective variety 
and $V$ is a variation of Hodge structure supported on a smooth
Zariski open of $Y$, with quasiunipotent monodromy at infinity. Then
there is a natural  extension $S(V)$ of $F^{\max }V$
to a coherent sheaf  on $Y$. If $M$ is an ample vector bundle on $Y$ then
$$H^i(Y, S(V)\otimes M)=0$$
for  $i\ge \dim Y+\rank M$.

My thanks to Yohan Brunebarbe, Adrian Langer, Mihnea Popa, and Kang
Zuo for various comments, and to Junecue Suh for sending me
\cite{suh}.

\section{Statement of the main results}

In order to put the main results in context,
 let us  quickly summarize the relevant   ideas of Simpson \cite{simpson}, and their extensions due
to Mochizuki \cite{mochizuki, mochizuki2}. We wish to point out from the beginning
that since we will be applying these to a
variation of Hodge structure, we will not need any of  the deeper existence
theorems from these papers. The only hard result needed is Simpson's
semistability theorem explained below.
Let $X$ be a $d$ dimensional smooth  projective variety defined over an algebraically
closed field $k$.  Fix a reduced effective divisor with simple normal
crossings $D$ and  an ample line bundle $L$ on $X$.
These assumptions will hold throughout
the paper. We also assume for the next few paragraphs that the ground
field $k=\C$.

We may identify the symmetric space $ Gl(r)/U(r)$ with the space of
positive definite hermitian matrices by sending $M\in GL(r)$ to $ M^*M$. Therefore
given a local system $\mathcal{V}$ on $(X-D)^{an}$ of
rank say $r$, a  hermitian metric $h$ on it can be viewed as a $\pi_1((X-D)^{an})$-equivariant
$C^\infty$ map from the universal cover $\widetilde{(X-D)^{an}}$ to 
$ GL(r)/U(r)$. The pair $(\mathcal{V}, h)$ is called a
tame harmonic bundle if $h$, viewed as a map as above, is harmonic with mild
singularities near $D$. Simpson (when $d=1$) and Mochizuki  ($d$
arbitrary) show how to associate to any tame
harmonic bundle, a so called parabolic or filtered  Higgs bundle. This consists of
 a holomorphic  vector bundle $E$ on $X$, an $\OO_X$-linear map
$$\theta:E\to \Omega_X^1(\log{D})\otimes E$$
such that  $\theta^2=\theta\wedge\theta$ viewed as a section of  $H^0(
\Omega_X^2(\log{D})\otimes E)$ is zero, and certain filtrations along
$D$.  We will not need to make the last part precise, since  these filtrations will be
trivial for the cases of interest to us that will be described in the next paragraph.

For us, the key example of a harmonic bundle  arises as follows.
A polarized complex variation of Hodge structure of weight $n$ on $X-D$ is a local
system $\mathcal{V}$ on $(X-D)^{an}$ with an indefinite form $Q$ (the polarization) and  a 
bigrading of the associated $C^\infty$ vector bundle
$$C_X^\infty\otimes_\C \mathcal{V} = \bigoplus_{p+q=n} V^{pq}$$
These  are required to  satisfy Griffiths transversality
(appropriately formulated) and the
Hodge-Riemann relations. However, unlike the
usual notion, there is no requirement about the existence of  a $\Z$ or
$\Q$ lattice in $\mathcal{V}$.  Let us also suppose that the monodromies of the local system about
components of $D$ are unipotent.  This assumption simplifies the
story, and in the geometric case, it is close to automatic; more precisely
it can always be achieved by pulling back to a
branched cover.  After adjusting signs, the polarization
determines a positive definite metric $h=\sum (-1)^pQ|_{V^{pq}}$ on
$\mathcal{V}$ such that the pair is tame harmonic.
The associated  Higgs bundle  is  given explicitly by $E\cong Gr_F(V)$ with Higgs field
$$\theta=Gr_F(\nabla):E\to  \Omega_X^1(\log{D})\otimes E$$
 induced by the connection. The unipotency of  local monodromies,
forces the  parabolic structure to be   trivial.
For our purposes, a Higgs bundle on $(X,D)$ will simply refer to a
pair $(E,\theta)$ as above without any parabolic structure.

Suppose that   $(E,\theta)$  is a Higgs bundle arising from a complex
 variation of Hodge structure with unipotent local monodromy.
Then it has a number of special features that we need to explain.

\begin{enumerate}
\item The Chern classes of $E$, in rational cohomology, all
vanish. This is  because $c_i(V)=0$ by \cite[cor
B3]{ev} and $c_i(V)=c_i(E)$ since $V$ and $E$ have the same class in
the Grothendieck group.
\item The Higgs bundle is semistable in the sense that $\mu(E')\le \mu(E)=0$ 
for any proper coherent 
subsheaf $E'\subset E$ stable under $\theta$, where 
$$\mu(E)=\frac{\deg E}{\rank E} = \frac{c_1(E)\cdot L^{d-1}}{\rank
  E}$$
 is the slope.  We can argue as follows. After replacing $L$ by a
 power, we can assume that it is very ample. Let $C\subset X$ be a curve given as a complete
 intersection of general divisors in $|L|$. Then it is enough to show
 that $\mu(E'|_C)\le 0$. So it suffices to prove the semistability of
 $(E,\theta)$ after replacing $X$ by $C$, $\V$ by  $\V|_C$ etc.  For
 curves  semistability is proved by Simpson \cite[thm
 5]{simpson}. Simpson proves this more generally for Higgs bundles 
 arising from tame harmonic bundles. In this generality, the parabolic
 structure may be nontrivial so the 
 definition of semistability is a bit more complicated; however, it reduces
 to what we gave when the parabolic structure is trivial, as is the
 case here.

\item Finally observe that our bundle carries a grading $E=\bigoplus E^p$ such that
$\theta(E^p)\subset E^{p-1}$. This implies that $\theta$ is nilpotent
in the sense that $E$ carries a filtration $E=N^0\supset N^1\ldots$  such that it stable under
that action of $\theta$, and the induced action on the associated
graded is zero. The length of the shortest such filtration is called
the level of nilpotence of $\theta$.

\end{enumerate}

Higgs bundles of this type also arise from certain variations of
mixed Hodge structures called admissible variations. We will not recall the precise axioms, which
are rather technical, but instead  we use a weaker  notion. 
Let us say that a weak  complex variation
of mixed Hodge structures with unipotent local mondromies around $D$, consists of local system $\V$ on 
$(X-D)^{an}$ with unipotent local monodromy, a filtration $\mathcal{W}\subset \V$ by sublocal systems,
a filtration of the Deligne extension $V$ of $\V\otimes \OO_{X-D}$ by subbundles  $F^\dt\subset V$ satisfying Griffiths transversality 
$$\nabla(F^p)\subset \Omega_X^1(\log D)\otimes F^{p-1}$$
We require that the associated graded with respect to $W$ is a polarizable pure variation of Hodge structure.
We  can see that $Gr_F(V)$ is again a Higgs bundle, with a filtration by Higgs bundles induced by $\mathcal{W}$.

\begin{lemma}
 An extension of two nilpotent, semistable Higgs bundles with vanishing Chern classes has the same property.
\end{lemma}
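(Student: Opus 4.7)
The plan is to verify each of the three defining conditions—vanishing of rational Chern classes, $\theta$-semistability with slope zero, and nilpotence of $\theta$—separately for the middle term of a short exact sequence
$$0 \to (E_1,\theta_1) \to (E,\theta) \to (E_2,\theta_2) \to 0$$
of Higgs bundles. Write $\pi\colon E \to E_2$ for the projection. The Chern class statement is immediate from the Whitney sum formula in rational cohomology: $c(E) = c(E_1)\,c(E_2) = 1$, so $c_i(E) = 0$ for all $i \geq 1$.

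For semistability, I would argue as follows. Let $F \subset E$ be a $\theta$-stable coherent subsheaf. Then $F_1 := F \cap E_1$ is automatically $\theta_1$-stable and $F_2 := \pi(F)$ is $\theta_2$-stable, and they fit into a short exact sequence $0 \to F_1 \to F \to F_2 \to 0$. The semistability of the $(E_i,\theta_i)$ gives $\deg F_i \leq 0$ (interpreted as $0$ when $F_i = 0$), and additivity of rank and degree in short exact sequences then forces $\deg F = \deg F_1 + \deg F_2 \leq 0$, hence $\mu(F) \leq 0 = \mu(E)$.

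For nilpotence, I would splice the filtrations of $E_1$ and $E_2$. Choose $\theta_i$-stable filtrations $E_i = N^0_i \supset N^1_i \supset \cdots \supset N^{\ell_i}_i = 0$ on which the induced Higgs field vanishes on each associated graded piece, and define a filtration on $E$ by $N^k E = \pi^{-1}(N^k_2)$ for $0 \leq k \leq \ell_2$ and $N^{\ell_2 + j} E = N^j_1$ for $0 \leq j \leq \ell_1$. A direct check of the inclusions $\theta(N^k E) \subset \Omega^1_X(\log D) \otimes N^{k+1} E$ (treating the two ranges separately, and the boundary $k = \ell_2$ where $N^{\ell_2} E = E_1 = N^0_1$) then shows that $\theta$ is nilpotent in the required sense.

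I do not anticipate any genuine obstacle; the one point that deserves care, and is used tacitly in the semistability argument, is the observation that $F \cap E_1$ and $\pi(F)$ inherit Higgs substructures automatically from the $\theta$-stability of $F$, so that the hypotheses on $(E_i,\theta_i)$ actually apply to them.
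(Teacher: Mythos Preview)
Your proof is correct and follows the same approach as the paper. The paper dispatches nilpotence and the Chern class condition in one sentence (``immediate from the definition and the Whitney sum formula'') and then gives exactly your semistability argument via $\deg E' = \deg(E'\cap E_1) + \deg \im(E'\to E_2) \le 0$; your spliced filtration merely spells out the ``immediate'' nilpotence claim explicitly.
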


\begin{proof}
  The nilpotence and vanishing of Chern classes are immediate from the definition and the Whitney sum formula. 
Given an exact sequence 
$$0\to (E_1, \theta_1)\to (E,\theta)\to (E_2,\theta_2)\to 0$$
of Higgs bundles and a sub Higgs sheaf $(E',\theta') \subseteq (E,\theta)$
$$\deg E'= \deg E'\cap E_1 + \deg \im (E'\to E_2)\le 0$$ 
\end{proof}

\begin{cor}
  The Higgs bundle $Gr_F(V)$ associated to a weak complex variation of mixed Hodge structures is
 nilpotent, and semistable with vanishing Chern classes.
\end{cor}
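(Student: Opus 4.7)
The plan is to reduce to the pure case (already handled in the three numbered properties of the excerpt) by induction on the length of the weight filtration $\mathcal{W}$, using the preceding Lemma as the inductive step.

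First I would unpack the definition of a weak complex VMHS with unipotent local monodromies to get a filtration on $V$ itself. Since $\mathcal{W}_\bullet \subset \V$ is a filtration by sublocal systems and all local monodromies on $\V$ are unipotent (hence so are those on each $\mathcal{W}_k$ and each quotient $\mathcal{W}_k/\mathcal{W}_{k-1}$), one can form Deligne extensions termwise. By functoriality of the Deligne extension under monomorphisms of local systems with unipotent monodromy, the canonical extension of $\mathcal{W}_k\otimes\OO_{X-D}$ sits inside $V$ as a subbundle $W_k V$, stable under $\nabla$, and the quotient $W_kV/W_{k-1}V$ is canonically identified with the Deligne extension of $(\mathcal{W}_k/\mathcal{W}_{k-1})\otimes\OO_{X-D}$.

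Next, because $F^\bullet\subset V$ satisfies Griffiths transversality and the $W$-filtration is stable under $\nabla$, the induced filtration on $\mathrm{Gr}_F(V)$ by the subsheaves $\mathrm{Gr}_F(W_k V)$ is a filtration by Higgs subbundles, with subquotients
\[
\mathrm{Gr}_F(W_kV)/\mathrm{Gr}_F(W_{k-1}V) \;\cong\; \mathrm{Gr}_F\bigl(W_kV/W_{k-1}V\bigr),
\]
this being the Higgs bundle associated (in the sense discussed before the Lemma) to the polarizable pure variation $\mathcal{W}_k/\mathcal{W}_{k-1}$. By the three numbered properties stated in the excerpt, each such subquotient is a nilpotent, semistable Higgs bundle with vanishing rational Chern classes.

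Finally, I would induct on the length of $\mathcal{W}$: the base case (length one, i.e.\ pure) is exactly the preceding discussion, and the inductive step follows by applying the Lemma to the short exact sequence of Higgs bundles
\[
0 \to \mathrm{Gr}_F(W_{k-1}V) \to \mathrm{Gr}_F(W_kV) \to \mathrm{Gr}_F(W_kV/W_{k-1}V) \to 0.
\]
The only place where care is needed is the compatibility of Deligne extension with the weight filtration; this is where the unipotent-monodromy hypothesis is used, and it is otherwise a formal diagram chase. Nothing else in the argument is substantive.
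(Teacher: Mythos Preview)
Your proposal is correct and follows exactly the route the paper intends: the corollary is stated immediately after Lemma~1.1 with no separate proof, because the paper has already asserted (just before the lemma) that $Gr_F(V)$ carries a filtration by Higgs subbundles induced by $\mathcal{W}$ whose graded pieces are the Higgs bundles of the pure factors, and the corollary then follows by induction on the length of $\mathcal{W}$ using the lemma. You have simply spelled out the details the paper leaves implicit, including the compatibility of the Deligne extension with $\mathcal{W}$ under the unipotent-monodromy hypothesis.
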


Given a Higgs bundle $(E,\theta)$ on $(X,D)$, since $\theta^2=0$, we
get a ``de Rham'' complex
$$\DR(E,\theta) = E \stackrel{\theta}{\to} \Omega_X^1(\log{D})\otimes E \stackrel{\theta}{\to}
\Omega_X^2(\log{D})\otimes E\ldots$$
If this arises from a  weak complex variation of mixed complex of Hodge structure $\V$ as
above, $\DR(E,\theta)$ can be identified with the assocated
graded $Gr_F\DR(V,\nabla)$ of the de Rham complex with respect to the
filtration
$$F^p \DR(V,\nabla) = F^pV \to\Omega_X^1(\log{D})\otimes
F^{p-1}V\ldots$$
Finally, observe that the notions of Higgs bundle, semistability,
nilpotence and $\DR$ make sense over any field
$k$. Following \cite{di}, we say that the pair $(X,D)$  is liftable modulo
$p^2$ if it lifts to a smooth scheme over $\Spec W_2(k)$ with a relative
normal crossing divisor.

\begin{thm}\label{thm:main1}
Let $(X,D,L)$, as above, be defined over an algebraically closed field
$k$. Let $(E,\theta)$ be a nilpotent semistable Higgs bundle on $(X,D)$ with
vanishing Chern classes in $H^*(X_{et}, \Q_\ell)$. Suppose that either
\begin{enumerate}
\item[(a)] $\Char k= 0$, or
\item[(b)] $\Char k=p>0$, $(X,D)$ is liftable modulo $p^2$,  $d+ \rank
  E< p$.
\end{enumerate}
Then
$$H^i(X,\DR(E,\theta)\otimes L)=0$$
for $i>d$.
\end{thm}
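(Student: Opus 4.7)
Case (a) reduces to (b) by spreading out. Fix a finitely generated $\Z$-subalgebra $R \subset k$ over which the data $(X, D, L, E, \theta)$ is defined, and specialize at closed points $s \in \Spec R$ whose residue characteristic $p$ is sufficiently large. Nilpotence, vanishing of $\ell$-adic Chern classes, and the inequality $d + \rank E < p$ are automatic for $p$ large; semistability of Higgs bundles is preserved on a Zariski-open locus of $\Spec R$ by the standard openness results; and $(X_s, D_s)$ lifts to $W_2(\kappa(s))$ from its integral model. A further specialization puts us over $k = \overline{\FF}_p$ when needed. Upper semicontinuity of cohomology in a flat family then propagates any vanishing on the special fibre back to the generic fibre, so it suffices to prove (b) over $\overline{\FF}_p$.

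For (b), the plan is to build a self-map $B$ on the class of nilpotent semistable Higgs bundles with vanishing rational Chern classes, together with a bootstrapping inequality relating twists by $L$ and $L^p$. Concretely, I would apply the logarithmic inverse Cartier transform of Ogus--Vologodsky--Schepler to $(E, \theta)$, producing a flat logarithmic connection $(H, \nabla)$ on $X$ with $\DR(E, \theta) \cong \Phi_* \DR(H, \nabla)$, where $\Phi$ denotes the relative Frobenius; by the projection formula this gives
$$R\Gamma(X, \DR(E,\theta) \otimes L) \;\cong\; R\Gamma(X, \DR(H, \nabla) \otimes L^p).$$
Following Langer, I would then equip $(H, \nabla)$ with a canonical Griffiths-transversal filtration $F^\bullet$ such that the associated graded
$$B(E, \theta) \;:=\; (Gr_F H,\; Gr_F \nabla)$$
inherits all three properties. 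The spectral sequence of $F^\bullet$ on $\DR(H, \nabla) \otimes L^p$ then yields the bootstrapping inequality
$$\dim H^i(X, \DR(E,\theta) \otimes L) \;\le\; \dim H^i(X, \DR(B(E,\theta)) \otimes L^p).$$

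Iterating produces a Higgs--de Rham sequence $(E_j, \theta_j) := B^j(E, \theta)$ of nilpotent semistable Higgs bundles with vanishing Chern classes, all of rank $\rank E$, satisfying
$$\dim H^i(X, \DR(E,\theta) \otimes L) \;\le\; \dim H^i(X, \DR(E_j, \theta_j) \otimes L^{p^j})$$
for every $j$. Invoking the periodicity theorem of Langer (and of Lan--Shen--Yang--Zuo) over $\overline{\FF}_p$, this sequence is eventually periodic, so the bundles $\Omega_X^a(\log D) \otimes E_j$ lie in a finite set of isomorphism classes as $a$ and $j$ vary. Serre vanishing, applied uniformly to this finite family, kills $H^q$ of these sheaves twisted by $L^{p^j}$ for every $q > 0$ once $j$ is large enough; a stupid-filtration spectral sequence on $\DR(E_j, \theta_j) \otimes L^{p^j}$ forces its hypercohomology to vanish in degrees $> d$, and the bootstrapping inequality completes the proof.

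The main obstacle is the construction of $B$: one needs a filtration on $(H, \nabla)$ whose associated graded preserves nilpotence, semistability, \emph{and} the rational vanishing of Chern classes simultaneously. This is exactly what Langer's results on semistability and boundedness of Higgs sheaves in characteristic $p$ supply, and the accompanying periodicity of the Higgs--de Rham flow is the technical reason the ground field must ultimately be specialized to $\overline{\FF}_p$ rather than an arbitrary algebraically closed field of characteristic $p$.
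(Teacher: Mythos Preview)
Your proposal is correct and follows essentially the same route as the paper: spread out and specialize to $\overline{\FF}_p$, apply the Ogus--Vologodsky--Schepler inverse Cartier transform together with Langer's canonical Griffiths-transversal filtration to define $B=\Lambda C^{-1}$, derive the bootstrapping inequality $h^i(\DR(E,\theta)\otimes L)\le h^i(\DR(B(E,\theta))\otimes L^p)$ from the resulting spectral sequence, invoke eventual periodicity of the Higgs--de Rham sequence, and finish with Serre vanishing. The only point to tidy up is that the projection formula step really uses the Cartier connection on $L^{(p)}\cong L^p$ (as in the paper's Lemma on $\DR$ and $\nabla_{cart}$), and in case (b) the spreading-out must carry along the $W_2$-lift of $(X,D)$; both are routine.
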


We should remark that we only really need to assume that $c_1(E)=0$ and $c_2(E)\cdot
L^{d-2}=0$.

\begin{cor}[Saito]\label{cor:saito}
  If $\V$ is a weak complex variation  of Hodge mixed structure on $(X,D)$ with
  unipotent local monodromies around $D$, then
$$H^i(X,Gr_F\DR(V,\nabla)\otimes L)=0$$
for $i>d$.
\end{cor}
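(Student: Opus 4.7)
The plan is to deduce the corollary directly from Theorem~\ref{thm:main1}(a), applied to the Higgs bundle $(E,\theta) := (Gr_F(V), Gr_F(\nabla))$ attached to $\V$. Since we are working over $\C$ we are automatically in case (a), so no characteristic-$p$ hypothesis is required; the only things to check are that $(E,\theta)$ satisfies the structural hypotheses of the theorem and that $\DR(E,\theta)$ really agrees with $Gr_F\DR(V,\nabla)$.

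First, I would observe that the hypotheses on $(E,\theta)$ have already been verified in the Corollary just before the theorem: $Gr_F(V)$ is nilpotent (because $Gr_F(\nabla)$ decreases the $F$-grading by one), semistable, and has rational Chern classes equal to zero. Via the extension lemma, this was reduced to the pure case, where it follows from \cite{ev} (for Chern classes) together with Simpson's semistability theorem applied to a general complete intersection curve. No further argument is required here.

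Second, I would identify the complexes. By construction, the filtration $F^p\DR(V,\nabla) = F^p V \to \Omega_X^1(\log D)\otimes F^{p-1} V \to \cdots$ has its $p$-th graded piece equal to $Gr_F^p V \to \Omega_X^1(\log D)\otimes Gr_F^{p-1} V \to \cdots$, and summing over $p$ recovers $\DR(E,\theta)$ with differential $\theta = Gr_F\nabla$. Therefore $H^i(X, Gr_F\DR(V,\nabla)\otimes L) = H^i(X, \DR(E,\theta)\otimes L)$, and the desired vanishing for $i>d$ follows from Theorem~\ref{thm:main1}(a).

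There is essentially no obstacle to the corollary beyond Theorem~\ref{thm:main1} itself; its role is just to translate the Hodge-theoretic input into the Higgs-theoretic form in which the vanishing is actually proved. The genuine difficulty sits inside Theorem~\ref{thm:main1}, where one must descend to $\overline{\FF}_p$ for $p$ large and invoke the Ogus--Vologodsky/Schepler correspondence together with the eventual periodicity of the Higgs--de Rham sequence.
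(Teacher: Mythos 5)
Your proposal is correct and is exactly the argument the paper intends: the hypotheses of Theorem~\ref{thm:main1}(a) for $(Gr_F(V),Gr_F(\nabla))$ are supplied by the corollary to the extension lemma, and the identification $\DR(E,\theta)=Gr_F\DR(V,\nabla)$ is stated in the paper immediately before the theorem. Nothing further is needed.
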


The next result is a refinement of the semipositivity results of Fujita
\cite{fujita},  Kawamata \cite{kawamata} and Peters \cite{peters}.
It also overlaps with extensions due to Fujino, Fujisawa and Saito
\cite{ffs} and Brunebarbe \cite{brunebarbe}; see remark
\ref{rmk:brune} for further discussion.
Recall that a vector bundle on $X$ is nef, or numerically
semipositive, if any quotient line bundle of the pullback of $E$ to a curve has
nonnegative degree. Numerous other characterizations can be found in
\cite{lazarsfeld}. As usual, we let $\det(E)=\wedge^{\rank E} E$ if $E$ is a
nonzero vector bundle. For notational convenience, we define $\det(0)=\OO_X$.

\begin{thm}\label{thm:semipos}
  Suppose that $V$ is a weak complex variation of mixed   Hodge structure on $(X,D)$ with
  unipotent local monodromies around $D$. Then  $\wedge^a Gr_F^b(V)\otimes
  \det(F^{b+1})$ is nef for every $a$ and $b$. 
\end{thm}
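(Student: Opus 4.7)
My plan is to realize $W := \wedge^a Gr_F^b(V) \otimes \det(F^{b+1})$ as a Higgs quotient, bearing the zero induced Higgs field, of the Higgs bundle attached to a natural weak VMHS derived from $V$, and then to deduce nefness from the semistability of that Higgs bundle.

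First I consider the weak complex VMHS $\wedge^{a+s_b} V$, where $s_b := \rank F^{b+1}$. By the corollary following Lemma~1.1, its Higgs bundle $\wedge^{a+s_b} E = \bigoplus_p Gr_F^p \wedge^{a+s_b} V$ is nilpotent, semistable, and has vanishing Chern classes. Each graded piece decomposes further into multi-indexed summands $\bigotimes_p \wedge^{\alpha_p} E^p$, where $(\alpha_p)$ runs over tuples with $\sum_p \alpha_p = a+s_b$ and $\alpha_p \le r^p := \rank E^p$. In the nontrivial case $a \le r^b$, a short greedy argument shows the total Hodge weight $\sum_p p \alpha_p$ is uniquely maximized at $\alpha_b = a$ and $\alpha_p = r^p$ for $p \ge b+1$, producing the summand $\wedge^a E^b \otimes \bigotimes_{p \ge b+1} \det E^p = W$. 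Because $\theta$ lowers Hodge weight by one, this top-weight summand is in fact a Higgs quotient of $\wedge^{a+s_b} E$ whose induced Higgs field vanishes; consequently, every vector-bundle quotient of $W$ inherits the structure of a Higgs quotient of $\wedge^{a+s_b} E$ with zero Higgs field.

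To verify nefness of $W$, I check it on curves: for every morphism $\phi \colon C \to X$ from a smooth projective curve with $\phi(C) \not\subset D$ and every line bundle quotient $\phi^* W \to L$, I want $\deg L \ge 0$. The pullback $\phi^* \wedge^{a+s_b} V$ is again a weak VMHS on $C$ with unipotent monodromies around $\phi^{-1}(D)$, so its Higgs bundle $\phi^* \wedge^{a+s_b} E$ is semistable of slope zero on $C$ by the same corollary; since $L$ is realized as a Higgs quotient of it, $\deg L \ge 0$ follows. The remaining case $\phi(C) \subset D$ is handled by deforming $\phi$ within a family of maps $\phi_t$ with $\phi_t(C) \not\subset D$ for $t \ne 0$ and invoking continuity of intersection numbers to pass to the limit.

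The main obstacle is the combinatorial identification of $W$ as the unique top-Hodge-weight direct summand of $\wedge^{a+s_b} E$, which is needed both to identify $W$ with the stated tensor product and to guarantee the vanishing of the induced Higgs field. Past that, the proof is a direct application of Simpson-type semistability to the auxiliary weak VMHS $\wedge^{a+s_b} V$.
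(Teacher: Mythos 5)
Your combinatorial identification of $W=\wedge^a Gr_F^b(V)\otimes\det(F^{b+1})$ with the top graded piece $Gr_F^{\max}(\wedge^{r+a}V)$, $r=\rank F^{b+1}$, and the observation that this top piece is a Higgs quotient with vanishing induced Higgs field, are correct and agree with the identification the paper itself makes. Your treatment of curves $\phi\colon C\to X$ with $\phi(C)\not\subset D$ is also essentially sound: the pulled-back data is again a weak complex variation on $(C,\phi^{-1}(D)_{red})$ with unipotent local monodromies, its Higgs bundle is $\phi^*(\wedge^{r+a}E)$ of degree zero, and semistability forces every Higgs quotient, in particular every line bundle quotient of $\phi^*W$, to have nonnegative degree.

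The genuine gap is the case $\phi(C)\subset D$, and the proposed fix does not work. A curve contained in $D$ need not move out of $D$ in any algebraic family (take $D$ to contain a rigid curve, e.g.\ a $(-1)$-curve on a surface), so in general there is no family $\phi_t$ with $\phi_t(C)\not\subset D$ for $t\neq 0$; and nefness genuinely cannot be tested only on curves meeting $X-D$ (for $X$ a surface and $D$ a $(-1)$-curve, $\OO_X(D)$ has nonnegative degree on every irreducible curve other than $D$ itself, yet is not nef). For $C\subset D$ the local system does not restrict to any open subset of $C$, so there is no variation on $C$ whose Simpson semistability you can invoke, and slope-semistability of $(E,\theta)$ on $X$ with respect to $L$ says nothing about restrictions to special curves. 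This is exactly the difficulty the paper's proof is built to avoid: it applies theorem \ref{thm:main1} to $S^n(\wedge^{r+a}V)$, uses $Gr_F^{\max}(S^nV')=S^n(Gr_F^{\max}V')$ to obtain $H^i(S^n(W)\otimes\omega_X(D)\otimes L^m)=0$ for $i>0$, deduces global generation of $S^n(W)\otimes K\otimes L^m$ for all $n$ by Castelnuovo--Mumford regularity with $K\otimes L^m$ a fixed ample twist, and then concludes nefness from the symmetric-power criterion \cite[6.2.13]{lazarsfeld}, which requires no case analysis on the position of the curve. To close your gap you would have to replace the deformation step by such a global criterion, at which point you are essentially reproducing the paper's argument.
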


\begin{cor}
  Let $F^{\max}\subset V$ be the smallest
nonzero Hodge bundle, then  $F^{\max}$ is nef. For every $b$,
$\det F^b$ is nef.
\end{cor}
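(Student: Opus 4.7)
The plan is to deduce both statements as direct instances of Theorem \ref{thm:semipos}, using the convention that $\det 0 = \OO_X$.

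For the first assertion, let $b_0$ be the largest integer with $F^{b_0}\neq 0$, so that $F^{\max}=F^{b_0}$ and $F^{b_0+1}=0$. Then $Gr_F^{b_0}(V)=F^{b_0}$, and taking $a=1$ in Theorem \ref{thm:semipos} gives that
$$Gr_F^{b_0}(V)\otimes \det(F^{b_0+1}) = F^{\max}\otimes \OO_X = F^{\max}$$
is nef.

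For the second assertion, I would use downward induction on $b$. From the short exact sequence
$$0\to F^{b+1}\to F^b\to Gr_F^b(V)\to 0$$
we get the identification $\det F^b \cong \det F^{b+1}\otimes \det Gr_F^b(V)$. Applying Theorem \ref{thm:semipos} with $a=\rank Gr_F^b(V)$ shows that $\det Gr_F^b(V)\otimes \det F^{b+1}$ is nef, which is precisely $\det F^b$. The base case is the statement that $\det F^b = \OO_X$ for $b>b_0$, which is nef. Since tensor products with a nef factor need not preserve nefness in general, it is important that the theorem already packages $\det F^{b+1}$ into the statement, so we never need to invoke any multiplicative property of nefness beyond what Theorem \ref{thm:semipos} provides.

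There is no real obstacle here; both statements are formal consequences of Theorem \ref{thm:semipos} combined with the convention $\det 0=\OO_X$ and the multiplicativity of $\det$ on short exact sequences. The only minor care needed is to choose the correct exterior power $a$ in each case (namely $a=1$ for the first part and $a=\rank Gr_F^b(V)$ for the second).
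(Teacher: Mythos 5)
Your deduction is correct and matches the intended argument (the paper leaves this corollary as an immediate consequence of Theorem \ref{thm:semipos}): take $a=1$ at the top level $b_0$ for the first claim, and $a=\rank Gr_F^b(V)$ together with $\det F^b\cong \det Gr_F^b(V)\otimes\det F^{b+1}$ for the second. Note that the second part does not actually require induction --- each $b$ is handled directly by a single application of the theorem.
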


Suh \cite{suh} considered a generalization of Saito's theorem where the ampleness of $L$
is relaxed. We will prove a version of this as well.
Let $\Delta$ be a $\Q$-divisor  and let $m$ be the least common
multiple of the denominators of the coefficients of $\Delta$. We will
say that it is effective and fractional if the coefficients lie in $[0,1)$.
Given a vector bundle $M$, let us say that
{\em $M(-\Delta)$ is ample} if $S^m(M)(-m\Delta)$ is ample in the
usual sense.

\begin{thm}\label{thm:main2}
  Make the same assumptions as in theorem \ref{thm:main1}.
Let  $M$ be a vector bundle when $\Char k=0$, or a line bundle when
$\Char k=p$, such that  $M(-\Delta)$ is ample
for some fractional  effective $\Q$-divisor supported on $D$.  
Then 
\begin{equation*}
H^i(X,\DR(E,\theta)\otimes M(-D))=0  
\end{equation*}
for $i\ge d+ \rank M$.
\end{thm}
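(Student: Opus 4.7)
\emph{Proof proposal.}

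The plan is to adapt the proof of Theorem~\ref{thm:main1}, replacing the Serre vanishing invoked at the final step by the Le~Potier--Nakano type vanishing for logarithmic de~Rham complexes developed in \cite{arapura}; the fractional effective $\Q$-divisor $\Delta$ provides precisely the boundary data that such a refined vanishing requires.

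When $\rank M>1$ (which only happens in characteristic $0$), I would first reduce to the line bundle case by passing to a projective bundle $\pi\colon Y \to X$ with tautological line bundle $\OO_Y(1)$ satisfying $\pi_*\OO_Y(1)=M$ and $R^j\pi_*\OO_Y(1)=0$ for $j>0$. The pulled back Higgs bundle $\pi^*(E,\theta)$ on $(Y,\pi^*D)$ is still nilpotent and semistable with vanishing Chern classes, and $\OO_Y(1)(-\pi^*\Delta)$ is ample on $Y$ because $M(-\Delta)$ is ample on $X$. By the projection formula,
\[
R\pi_*\bigl(\pi^*\DR(E,\theta)\otimes \OO_Y(1)(-\pi^*D)\bigr)\;\simeq\;\DR(E,\theta)\otimes M(-D),
\]
and the target range $i\ge d+\rank M$ on $X$ corresponds exactly to $i>\dim Y$ on $Y$, reducing the theorem to the line bundle case.

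With $M$ a line bundle, I would reduce to $k=\overline{\FF}_p$ with $p$ large (in particular bigger than $d+\rank E$ and coprime to the denominator of $\Delta$) by spreading out, exactly as in the proof of Theorem~\ref{thm:main1}. The same bootstrap inequality coming from the Ogus--Vologodsky--Schepler inverse Cartier functor applies with the twist $M(-D)$ in place of the ample $L$; combining it with the eventual periodicity of the Higgs--de~Rham sequence $(E_j,\theta_j)=B^j(E,\theta)$ from \cite{langerI,lsyz} and choosing $N$ a multiple of the period, one controls $\dim H^i(X,\DR(E,\theta)\otimes M(-D))$ by the cohomology of $\DR(E,\theta)$ twisted by a large Frobenius pullback of $M(-D)$.

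The only missing ingredient is then the closing vanishing of this last term. Because $M(-\Delta)$ is ample, after sufficiently many Frobenius iterations the resulting line bundle is positive enough (relative to the boundary $D$) to invoke the Le~Potier--Nakano type vanishing of \cite{arapura} in place of Serre vanishing, yielding the cohomology vanishing in the range $i>d$ (which is $i\ge d+\rank M$ in the line bundle case) under the weaker positivity hypothesis involving $\Delta$. The main obstacle is this final matching step: one must verify that after many iterations of the Higgs--de~Rham bootstrap, the Frobenius pullback of $M(-D)$, with the fractional deficit $D-\Delta$ absorbed into the boundary data, genuinely satisfies the positivity hypothesis of the Le~Potier--Nakano vanishing in \cite{arapura}, and that this vanishing is strong enough to handle all terms of $\DR(E,\theta)$ simultaneously.
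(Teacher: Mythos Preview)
Your outline has a genuine gap at the bootstrap step, and it is exactly the place where the proof of Theorem~\ref{thm:main2} diverges from that of Theorem~\ref{thm:main1}.

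The inequality from Corollary~\ref{cor:osv} reads
\[
h^i(X,\DR(E,\theta)\otimes N)\ \le\ h^i(X,\DR(B(E,\theta))\otimes N^{(p)}).
\]
If you simply feed $N=M(-D)$ into this, the right hand side carries $M^{(p)}(-pD)$; after $n$ iterations you get $M^{(p^n)}(-p^nD)$. Since $\Delta$ has coefficients in $[0,1)$, the line bundle $M^{p^n}(-p^n\Delta)$ is ample but $M^{p^n}(-p^nD)=M^{p^n}(-p^n\Delta)\otimes\OO_X(-p^n(D-\Delta))$ is strictly more negative by an amount that grows with $p^n$. No amount of Frobenius iteration fixes this deficit, so the ``Le~Potier--Nakano type vanishing'' you invoke at the end never kicks in. In other words, the phrase ``the same bootstrap inequality \ldots\ applies with the twist $M(-D)$'' hides precisely the obstruction that the $(-D)$ twist creates.

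What the paper does instead is prove a refined bootstrap that allows one to \emph{add back} an arbitrary divisor $0\le D'\le (p-1)D$ at each step. This uses two ingredients you do not mention. First, a Hara-type quasi-isomorphism (Lemma~\ref{lemma:hara}): because the residues of $C^{-1}(E,\theta)$ are nilpotent, twisting the flat bundle by $\OO_X(D')$ with $0\le D'\le (p-1)D$ does not change the de~Rham complex up to quasi-isomorphism. Second, a duality (Lemma~\ref{lemma:duality}) is used to pass from the inequality with $+D'$ to the one with $-D-D'$. The upshot (Lemma~\ref{lemma:boot2}) is
\[
h^i(X,\DR(E,\theta)\otimes M(-D))\ \le\ h^i(X,\DR(B^n(E,\theta))\otimes M^{(p^n)}(-D-D'))
\]
for \emph{any} $0\le D'\le (p^n-1)D$. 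Now one chooses $D'$ so that $M^{(p^n)}(-D')$ realises the Frobenius amplitude $\phi(M,D)<\rank M$; combined with periodicity this yields the vanishing. Your sentence about ``absorbing the fractional deficit $D-\Delta$ into the boundary data'' points in the right direction but does not supply the mechanism.

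As for your first step: the paper does not reduce to line bundles via a projective bundle. It keeps $M$ a vector bundle (in characteristic~$0$) and applies the Frobenius amplitude bound $\phi(M,D)<\rank M$ directly (Theorem~\ref{thm:arapura}). Your projective bundle idea is a plausible alternative, but it is not as immediate as you write: the de~Rham complex on $Y$ is built from $\Omega_Y^\bullet(\log\pi^*D)$, not from $\pi^*\Omega_X^\bullet(\log D)$, so the claimed projection formula needs the Le~Potier filtration argument; and you must check that $\pi^*(E,\theta)$ remains semistable on $Y$ for a suitable polarisation before you can apply Theorem~\ref{thm:main1} there.
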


While clearly the
last theorem implies the first, we prefer to state and
prove them separately. The  proof of the first is somewhat
easier and it serves as a model for the last.

From the previous theorem, we  can  deduce a Koll\'ar-Saito type vanishing theorem for 
arbitrary varieties.
Let $Y$ be possibly singular complex projective variety, and let 
$\mathcal{V}$ be a complex variation of Hodge structures
   on  smooth Zariski open $U\subset Y$ with quasiunipotent
   monodromies at infinity. The last condition means that every
   restriction of the pull back of $\mathcal{V}$ to a punctured disk
   has quasiunipotent monodromy. For example, quasiunipotence holds
   when $\mathcal{V}$ is of geometric origin.
Choose a  desingularization
 $\pi: X\to Y$ which is an isomorphism over 
 $U$ and  such that the preimage $D=\pi^{-1}(Y-U)$ has simple
 normal crossings. Let $j:U\to X$ denote the inclusion.
    Let 
$$\nabla:V\to \Omega_X^1(\log D)\otimes V$$
be the Deligne extension of $\OO_U\otimes \mathcal{V}$ such that
eigenvalues of the residues lie in $(-1,0]$. We define 
$S'(V) =\omega_X\otimes (V\cap j_*F^{\max}(\OO_U\otimes \mathcal{V}))$ and $S(V)=\pi_*S'(V)$.
This coincides  with the similarly named object defined by Saito  in
\cite{saito2} because of [loc. cit., (3.1.1), thm 3.2].
Consequently, $S(V)$ is independent of the choice of $X$, although
this is easy to check directly as well.

\begin{thm}\label{thm:ks}
Let us keep the  notation  and assumptions of the previous paragraph.
Let $M$ be an ample vector bundle on $Y$, then
\begin{enumerate}
\item[(a)] 
$H^i(X,  S'(V)\otimes \pi^*M)=0$ 
for $i\ge \dim X +\rank M$.
\item[(b)] $R^i\pi_* (S'(V))=0$ for $i>0$.
\item[(c)] 
$H^i(Y, S( V)\otimes M)=0$
for $i\ge \dim X +\rank M$.
\end{enumerate}

\end{thm}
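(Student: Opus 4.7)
The plan is to deduce parts (a)--(c) from Theorem \ref{thm:main2}, by first reducing to unipotent monodromies via a branched cover and then exhibiting $S'(V)\otimes\pi^{*}M$ as a direct summand of a Higgs de Rham complex to which the theorem applies.

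First I would apply a Kawamata-type finite Galois cover $\tau : X' \to X$ ramified to sufficiently high order along each component of $D$ so that $\tau^{*}\mathcal{V}$ extends with unipotent monodromies around $D' = (\tau^{-1}D)_{\mathrm{red}}$. Since $\deg\tau$ is invertible in $\C$ and the canonical extensions on $X$ and $X'$ are compatible up to a twist by an effective divisor $\Xi$ supported on $D'$ (coming from shifting residue eigenvalues into $(-1,0]$ on the cover), a trace/splitting argument identifies $S'(V)\otimes\pi^{*}M$ as a direct summand of $\tau_{*}(S'(\tau^{*}\mathcal{V})\otimes(\pi\tau)^{*}M\otimes\OO_{X'}(-\Xi))$, so vanishing upstairs descends to $X$ and we may assume $\mathcal{V}$ has unipotent monodromies to begin with.

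Let $(E,\theta) = (Gr_{F}V, Gr_{F}\nabla)$ be the associated Higgs bundle; it is nilpotent, semistable, with vanishing Chern classes. The Hodge bigrading splits the Higgs de Rham complex,
\[
\DR(E,\theta) = \bigoplus_{p} Gr_{F}^{p}\DR(V,\nabla),
\]
where the $i$-th term of the summand indexed by $p$ is $\Omega_{X}^{i}(\log D)\otimes Gr_{F}^{p-i}V$; for $p = p_{\max}+d$ this summand is concentrated in degree $d$ and equals $\omega_{X}(D)\otimes F^{\max}V$. Since $\pi$ is projective birational and $M$ is ample on $Y$, the standard negativity of the $\pi$-exceptional locus (which sits inside $D$) furnishes a small effective fractional $\Q$-divisor $\Delta$ supported on $D$ with $\pi^{*}M(-\Delta)$ ample. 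Theorem \ref{thm:main2} applied to $(E,\theta)$ and $\pi^{*}M$ gives
\[
\mathbb{H}^{i}(X, \DR(E,\theta)\otimes\pi^{*}M(-D)) = 0 \quad\text{for } i \geq d + \rank M,
\]
hence the same vanishing for the $p = p_{\max}+d$ summand. Using $\omega_{X}(D)\otimes\pi^{*}M(-D) = \omega_{X}\otimes\pi^{*}M$, that summand after tensoring is $S'(V)\otimes\pi^{*}M$ placed in degree $d$, so its hypercohomology in degree $i$ equals $H^{i-d}(X, S'(V)\otimes\pi^{*}M)$; this gives (a).

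Parts (b) and (c) will follow from (a) by standard arguments. For (b), I would pick an ample line bundle $A$ on $Y$; for $n\gg 0$, Serre vanishing forces $H^{p}(Y, R^{q}\pi_{*}S'(V)\otimes A^{n}) = 0$ for $p>0$, so the Leray spectral sequence collapses to $H^{i}(X, S'(V)\otimes\pi^{*}A^{n}) = H^{0}(Y, R^{i}\pi_{*}S'(V)\otimes A^{n})$, which vanishes for $i \geq 1$ by (a); global generation of the right side for $n$ large then forces $R^{i}\pi_{*}S'(V) = 0$ for $i>0$. Part (c) is then immediate: by (b) and the projection formula $R\pi_{*}(S'(V)\otimes\pi^{*}M) = S(V)\otimes M$, so Leray identifies $H^{i}(Y, S(V)\otimes M)$ with $H^{i}(X, S'(V)\otimes\pi^{*}M)$ and (a) applies. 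The main obstacle is Step 1: pinning down how $F^{\max}$ of the canonical extension transforms under a ramified pullback and verifying that the auxiliary divisor $\Xi$ is effective, so that the vanishing genuinely descends; this is where quasiunipotence (rather than plain unipotence) of the monodromy enters essentially.
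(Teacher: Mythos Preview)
Your outline matches the paper's strategy closely: Kawamata cover to achieve unipotent monodromy, produce a fractional $\Delta$ supported on $D$ with $\pi^*M(-\Delta)$ ample, apply Theorem~\ref{thm:main2}, and extract the top graded summand $\omega_X(D)\otimes F^{\max}V[-d]$ of $\DR(E,\theta)$; parts (b) and (c) are then derived from (a) exactly as you describe.

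Two points deserve attention. First, your ``Step 1'' is where the paper invests the real work, and it does so differently from what you sketch. Rather than tracking an auxiliary divisor $\Xi$, the paper passes to the dual $W^\vee$, computes that the induced connection on $p_*W^\vee$ has residues in $[0,1)$, and then invokes uniqueness of the Deligne extension together with Grothendieck duality for the finite map $p$ to obtain directly $\bigl(p_*(\omega_Z\otimes F^{\max}W)\bigr)^G = S'(V)$. No correction divisor appears: the ramification is absorbed by $\omega_{Z/X}$ in the duality isomorphism. Your $\Xi$ is therefore unnecessary, and the concern you flag about its effectivity dissolves.

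Second, there is an order-of-operations issue in your reduction. You write ``we may assume $\mathcal V$ has unipotent monodromies to begin with'' and then invoke birationality of $\pi$ to produce $\Delta$. But after replacing $X$ by the cover $X'$, the composite $\pi\circ\tau:X'\to Y$ is generically finite of degree $>1$, so the exceptional-negativity argument no longer applies upstairs. The paper handles this by constructing $\Delta$ on $X$ \emph{first} (where $\pi$ is birational), showing $\pi^*M(-\Delta)$ is ample there, and only then pulling back along the finite cover $p$, where ampleness is preserved. You should reorder accordingly: establish $\pi^*M(-\Delta)$ ample on $X$, pull back to the cover, apply Theorem~\ref{thm:main2} on the cover, and descend via $G$-invariants.
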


When $M$ is a line bundle, this is due to Koll\'ar \cite{kollar} when
$V$ is of geometric origin and
 Saito \cite{saito2} in general.

\section{Proof of  theorem \ref{thm:main1}}

Before giving the proof, we need to explain
the basic tools. Let $(X,D,L)$ be as in the previous section, but now
defined over an
arbitrary  algebraically closed field $k$. 

\begin{lemma}\label{lemma:serre}
  If $L$ is an ample line bundle, then
$$H^i(X,\DR(E,\theta)\otimes L^{N})=0$$
for $i>d$ and $N\gg 0$.
\end{lemma}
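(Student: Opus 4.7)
The plan is to compute the hypercohomology via the standard first spectral sequence
$$E_1^{p,q} = H^q(X, \Omega_X^p(\log D) \otimes E \otimes L^N) \Longrightarrow H^{p+q}(X, \DR(E,\theta) \otimes L^N).$$
Here I regard the differentials on $\DR(E,\theta)\otimes L^N$ as $\OO_X$-linear (since $\theta$ is $\OO_X$-linear), so there is no issue with twisting by $L^N$.

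The key observation is that for each $p$ in the finite range $0 \le p \le d$, the sheaf $\Omega_X^p(\log D) \otimes E$ is coherent on the projective variety $X$, so ordinary Serre vanishing for the ample line bundle $L$ supplies an integer $N_p$ with $H^q(X, \Omega_X^p(\log D) \otimes E \otimes L^N) = 0$ for all $q > 0$ and all $N \ge N_p$. Taking $N_0 = \max_p N_p$, for every $N \ge N_0$ the $E_1$-page is concentrated in the single row $q = 0$.

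Hence the spectral sequence degenerates at $E_2$, and $H^i(X, \DR(E,\theta) \otimes L^N)$ is computed as the $i$-th cohomology of the complex of global sections $H^0(X, \Omega_X^\dt(\log D) \otimes E \otimes L^N)$. This complex is zero in degrees $p > d = \dim X$, so all such cohomology groups vanish, giving the desired statement for $i > d$.

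The argument is purely formal and does not use any of the distinctive hypotheses on $(E,\theta)$ (semistability, nilpotence, vanishing Chern classes, or the liftability condition in positive characteristic); those will only be needed later, presumably to promote this weak vanishing (valid for $L^N$ with $N \gg 0$) to the sharp statement for $L$ itself via a bootstrapping argument using the operator $B$ from the introduction.
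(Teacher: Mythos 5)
Your proof is correct and is essentially the paper's own argument: the paper likewise invokes Serre vanishing together with the spectral sequence $E_1^{ab}=H^b(\Omega_X^a(\log D)\otimes E\otimes L^N)\Rightarrow H^{a+b}(\DR(E,\theta)\otimes L^N)$, and you have simply written out the details (uniform choice of $N$ over the finitely many $a$, and the observation that the surviving terms live in total degree $\le d$).
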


\begin{proof}
This follows from Serre vanishing and the spectral sequence
$$E_1^{ab} =H^b(\Omega_X^a(\log D)\otimes E\otimes L^N)\Rightarrow H^{a+b}(  \DR(E,\theta)\otimes L^{N})$$
\end{proof}
 
A vector bundle with an integrable logarithmic connection 
$$\nabla:V\to\Omega_X^1(\log D)\otimes V$$
defined in the usual way, will simply be  referred   as a flat bundle below. If $(V, \nabla)$ is a flat bundle and $F^\dt $  a filtration on $V$ by subbundles satisfying
Griffiths transversality $\nabla(F^p)\subset F^{p-1}$, then $(Gr_F(V),
Gr_F(\nabla))$ is a Higgs bundle exactly as above.
We say that that the connection $(V,\nabla)$  is semistable if $\mu(V')\le
\mu(V)$ for every $\nabla$-stable saturated subsheaf $V'\subseteq
V$. (Saturation means that $V/V'$ is torsion free.) We have the following
fact:

\begin{thm}[Langer {\cite[thm 5.5]{langerD}}]\label{thm:langer}
  If $(V,\nabla)$ is a semistable flat bundle, there exists a
  canonical filtration $V=F_{can}^0 V\supseteq F_{can}^1V\ldots$ satisfying Griffiths
  transversality such that $\Lambda(V,\nabla)=(Gr_{F_{can}}(V),
Gr_{F_{can}}(\nabla))$ is a semistable Higgs bundle.
\end{thm}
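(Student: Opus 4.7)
The plan is to construct the filtration $F_{can}^\bullet$ iteratively, starting from the Harder--Narasimhan (HN) filtration of $V$ regarded as a mere $\OO_X$-module, and exploiting the failure of $\nabla$-stability of its HN subsheaves to manufacture the Higgs field on the associated graded. The guiding principle is that semistability of $(V,\nabla)$ forces any $\OO_X$-destabilizing subsheaf to be moved by $\nabla$ out of itself, and this movement is $\OO_X$-linear modulo the subsheaf, hence encodes graded information.

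First I would form the HN filtration $0=V_0 \subsetneq V_1 \subsetneq \cdots \subsetneq V_\ell = V$ of $V$ as an $\OO_X$-module, with strictly decreasing slopes on successive quotients. If $\nabla$ preserved $V_1$, then $(V,\nabla)$-semistability would force $\mu(V_1)\le \mu(V)$, contradicting the defining property of the HN filtration (unless $\ell=1$, in which case $V$ is already $\OO_X$-semistable and we take the trivial filtration $F_{can}^0 = V$). Hence the composition
$$V_1 \xrightarrow{\nabla} \Omega_X^1(\log D)\otimes V \to \Omega_X^1(\log D)\otimes V/V_1$$
is nonzero, and the Leibniz rule makes it $\OO_X$-linear, giving the first piece of a Higgs field. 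Iterating this analysis on the successive quotients -- and refining whenever Griffiths transversality fails -- should yield a decreasing filtration $F_{can}^\bullet$ with $\nabla(F_{can}^p) \subseteq \Omega_X^1(\log D)\otimes F_{can}^{p-1}$, so that $Gr_{F_{can}}(\nabla)$ descends to an $\OO_X$-linear Higgs field $\theta_{can}$ on $E_{can}=Gr_{F_{can}}(V)$.

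Next I would verify semistability of $\Lambda(V,\nabla) = (E_{can},\theta_{can})$. Given a $\theta_{can}$-stable saturated subsheaf $E' \subseteq E_{can}$, one lifts it to a saturated $\nabla$-stable subsheaf $V' \subseteq V$ compatible with $F_{can}^\bullet$, for which $\mu(V') = \mu(E')$; semistability of $(V,\nabla)$ then yields $\mu(E') \le \mu(V) = \mu(E_{can})$. Canonicity of the filtration follows from uniqueness of the HN filtration used at each stage of the construction.

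The principal obstacle is the iteration itself: after one round the $\OO_X$-module $Gr^{HN}(V)$ need not line up with Griffiths transversality, since $\nabla$ can shift HN level by more than one. One must therefore alternate HN refinement with $\nabla$-induced adjustment and show the process stabilizes after finitely many steps, which uses boundedness of the family of semistable subsheaves of bounded slope on $X$, together with a careful choice of indexing convention on $F_{can}^\bullet$. Establishing that the resulting filtration is intrinsic to $(V,\nabla)$ -- independent of the iteration scheme -- is the technical core of the argument, and is where the statement depends essentially on Langer's boundedness results rather than on any feature special to characteristic zero.
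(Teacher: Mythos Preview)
The paper does not prove this statement at all; it is quoted as a black box from Langer \cite[thm 5.5]{langerD} and used freely thereafter. So there is no proof in the paper to compare your proposal against.

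That said, your sketch is in the right general direction---Langer's construction does proceed by iterating Harder--Narasimhan filtrations of $V$ as an $\OO_X$-module and adjusting for the failure of $\nabla$-stability---but your verification of semistability of $\Lambda(V,\nabla)$ has a genuine gap. You claim that a saturated $\theta_{can}$-stable subsheaf $E'\subseteq E_{can}$ lifts to a saturated $\nabla$-stable subsheaf $V'\subseteq V$ with $\mu(V')=\mu(E')$. This lifting is not available in general: a graded subobject of $Gr_{F_{can}}(V)$ need not come from any filtered subobject of $V$, let alone one preserved by $\nabla$. (Think already of the two-step case: a subsheaf of $F^0/F^1 \oplus F^1$ that is not the graded of any subsheaf of $V$.) Langer's actual argument avoids this by proving semistability inductively along the iteration, controlling the slopes of the HN pieces at each stage rather than by a single lift-and-compare at the end. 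Your termination discussion is also vaguer than the actual argument requires: the iteration stabilizes because the HN polygon strictly decreases (in a suitable ordering) at each step, not merely by an abstract boundedness appeal.
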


The second ingredient is the positive  characteristic version of Higgs
bundle theory due to  Ogus-Vologodsky \cite{ov} and Schepler
\cite{schepler}.  Assume now that  the characteristic is $p>0$.
Let $Fr:X\to X$ be the absolute Frobenius
given by the identity on the space and $p$th power on the structure
sheaf.
This factors as 
$$X\stackrel{Fr'}{\longrightarrow} X' \stackrel{\Phi}{\to} X$$
where the  map $\Phi$ is base change along the absolute Frobenius of
$k$. The map $\Phi$ is an  isomorphism of schemes because $k$ is perfect.
The first map $Fr'$, called the relative Frobenius,  is $k$-linear and is
what is used is \cite{ov}. It is more convenient for us to work with
$X$ alone  and transport objects from $X'$ to $X$ via $\Phi$ when necessary.

We recall some basic facts about flat bundles in  positive
characteristic, and refer to \cite{katz} for more details.
 We have a new invariant for  a flat bundle $(V,\nabla)$ called the
$p$-curvature, which gives the obstruction for $\nabla$ to commute with
$p$th powers. We denote the pullback $Fr^*M$ of a vector
bundle by $M^{(p)}$. This  carries a canonical
integrable connection $\nabla_{cart}:M^{(p)}\to \Omega_X^1\otimes M^{(p)}$ with trivial $p$-curvature, that we call the Cartier
connection.  Under the natural embedding $M\hookrightarrow
M^{(p)}=\OO_X\otimes_{\OO_{X'}}\Phi^*M$ given by $m\mapsto 1\otimes m$,
we have $\ker \nabla_{cart}\cong M$. 

\begin{lemma}\label{lemma:DRcart}
 Let $M$ be a vector bundle and let $(V,\nabla)$ be a flat
 bundle on $(X,D)$. Equip  $V\otimes M^{(p)}$ with the tensor product connection
 $\nabla_T=\nabla\otimes \nabla_{cart}$. Then
$$(Fr_*\DR(V,\nabla))\otimes M\cong
Fr_*\DR(V\otimes M^{(p)}, \nabla_T)
$$
as complexes of $\OO_X$-modules.
\end{lemma}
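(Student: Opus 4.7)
The plan is to derive the isomorphism term by term from the projection formula for the absolute Frobenius $Fr$, and then to verify that the resulting identifications are compatible with the differentials.

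For each $i\ge 0$, set $\cF_i=\Omega_X^i(\log D)\otimes V$. Since $M^{(p)}=Fr^*M$, the projection formula furnishes a natural $\OO_X$-linear isomorphism
$$\phi_i:Fr_*\bigl(\cF_i\otimes M^{(p)}\bigr)\xrightarrow{\sim}Fr_*(\cF_i)\otimes M,$$
characterized on local sections by sending $\omega\otimes v\otimes(1\otimes m)$ to $(\omega\otimes v)\otimes m$, where $1\otimes m$ denotes the canonical generator of $Fr^*M=\OO_X\otimes_{\OO_{X'}}\Phi^*M$. This already gives the claimed isomorphism at the level of graded $\OO_X$-modules.

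To promote this to an isomorphism of complexes, the next step is to check that the $\phi_i$ intertwine $\nabla_T$ with $\nabla\otimes\mathrm{id}_M$. The key input is that the Cartier connection annihilates the canonical generators, $\nabla_{cart}(1\otimes m)=d(1)\otimes m=0$, which is built into its very definition. Hence by the Leibniz rule, for any local section $\eta\otimes v\otimes(1\otimes m)$ of $\cF_i\otimes M^{(p)}$,
$$\nabla_T\bigl(\eta\otimes v\otimes(1\otimes m)\bigr)=d_i(\eta\otimes v)\otimes(1\otimes m),$$
where $d_i$ denotes the $i$-th differential of $\DR(V,\nabla)$. Applying $\phi_{i+1}$ sends this to $d_i(\eta\otimes v)\otimes m$, which is precisely $(\nabla\otimes\mathrm{id}_M)$ applied to $\phi_i\bigl(\eta\otimes v\otimes(1\otimes m)\bigr)$. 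Since sections of the above form generate $\cF_i\otimes M^{(p)}$ as an $\OO_X$-module, this identifies the differentials on both sides.

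The only real subtlety is bookkeeping: one must be careful about the Frobenius twist on the $\OO_X$-module structure of the push-forwards, and verify that the projection-formula isomorphisms are natural enough to assemble into a single map of complexes. I expect these checks to be routine once the vanishing $\nabla_{cart}(1\otimes m)=0$ is invoked, after which the lemma follows directly.
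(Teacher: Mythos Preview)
Your proposal is correct and follows essentially the same argument as the paper: both use the projection formula to obtain termwise isomorphisms and then invoke $\nabla_{cart}(1\otimes m)=0$ to verify compatibility with the differentials. The paper's version is more terse, but the content is identical.
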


\begin{proof}
  The projection formula yields isomorphisms
$$ \pi:Fr_*(\Omega_X^i(\log D)\otimes V)\otimes M\cong
Fr_*(\Omega_X^i(\log D)\otimes V\otimes M^{(p)}) $$
If $\alpha$ and $m$ are local sections of  $Fr_*(\Omega_X^i(\log
D)\otimes V)$ and $M$, then $\nabla_T(\pi(\alpha\otimes m)) =
\pi(\nabla(\alpha)\otimes m)$ because $\nabla_{cart}m=0$.
\end{proof}

  A flat bundle is called nilpotent (of level at most $n$) if the there
exists a filtration (of length at most $n$) by flat bundles such
that the $p$-curvature of the associated graded vanishes.
To avoid confusion, we should point out that the word ``nilpotent''
will be used in three different senses in the next theorem.  Two have
already been explained. We say that the residue
$Res_{D_i}(\nabla)\in End(V|_{D_i})$ along a component $D_i\subset D$ is
nilpotent of level $\le \ell$ if $(Res_{D_i}\nabla)^\ell=0$.

\begin{thm}[Ogus-Vologodsky, Schepler]\label{thm:osv1}
   If $(X,D)$ lifts modulo $p^2$, then there exists an equivalence between
the category of flat bundles
  which are nilpotent of level less than $p$ and with residues
  nilpotent of level  $\le p$, and the category of  Higgs
  bundles which are nilpotent of level at most $p$. The functor giving the
  equivalence, which depends on the choice of lifting, is called the Cartier transform $C$.
\end{thm}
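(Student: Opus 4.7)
The plan is to construct the Cartier transform $C$ locally from a choice of a lifting of Frobenius, and then to use the nilpotence hypotheses to force the local constructions to glue canonically. This is the strategy of Ogus-Vologodsky, together with Schepler's refinements for the log setting.

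First I would work Zariski-locally on $X$, where one can pick a lift $\tilde F:\tilde X\to \tilde X'$ of the relative Frobenius modulo $p^2$ that is compatible with the log structure along $D$. Such local lifts exist because the obstruction to lifting log-smooth Frobenius vanishes under the assumption that $(X,D)$ itself lifts modulo $p^2$. A direct Mazur-style calculation shows that the pullback along $\tilde F$ of any section of $\Omega^1_{\tilde X'}(\log D')$ is divisible by $p$, so division by $p$ yields an $\OO_X$-linear splitting
$$\zeta_{\tilde F}:(Fr')^{*}\Omega^1_{X'}(\log D')\longrightarrow \Omega^1_X(\log D)$$
of the (inverse) log Cartier operator. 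Following \cite{ov}, I reinterpret this dually as a splitting of the canonical Azumaya algebra structure on $Fr'_{*}\cD_X(\log D)$ over the algebra of functions on the log cotangent bundle of $X'$.

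Given a flat bundle $(V,\nabla)$ satisfying the stated nilpotence conditions, I would produce $C(V,\nabla)=(E,\theta)$ on the local patch as follows. The assumption that the $p$-curvature is nilpotent of level less than $p$ forces the action of the Azumaya algebra on $Fr'_{*}V$ to factor through a truncated divided-power quotient in which only factorials of integers strictly less than $p$ appear; these are units in characteristic $p$, so the splitting $\zeta_{\tilde F}$ converts the action into a Higgs-module structure on a bundle on $X'$, and pulling back by $\Phi$ gives $(E,\theta)$ on $X$, with $\theta$ manifestly nilpotent of level at most $p$. The residue condition is exactly what makes the logarithmic version of this formula work: only the same range of factorials appears in the ``log divided power'' expressions near $D$, so no denominator vanishes. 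This is the content of Schepler's extension \cite{schepler} of the main theorem of \cite{ov} to the logarithmic case. An analogous recipe applied to a nilpotent Higgs bundle produces the quasi-inverse.

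Finally I would verify gluing. Two lifts $\tilde F_1, \tilde F_2$ over the same patch differ by an $\OO_X$-linear derivation $(Fr')^{*}\Omega^1_{X'}(\log D')\to \OO_X$, and the induced comparison $C_{\tilde F_1}\cong C_{\tilde F_2}$ is given by an exponential-type formula whose terms are iterated applications of $\theta$. The level $\le p$ nilpotence truncates this formally infinite sum to a finite one, giving a canonical isomorphism that satisfies the cocycle condition on triple overlaps; thus the local Cartier transforms assemble into a global functor depending only on the chosen lift of $(X,D)$ modulo $p^2$. The hardest step is the careful bookkeeping near $D$: one has to check that the log divided-power envelope behaves well after reduction modulo $p$ and that level $\le p$ is exactly the sharp threshold keeping all denominators invertible and both constructions landing in (and inverting on) the nilpotent subcategories. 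Once that log calculus is set up, the equivalence follows formally from the fact that the two local recipes are mutually inverse at the level of splittings of the Azumaya algebra.
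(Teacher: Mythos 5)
Your outline is correct and is essentially the same route the paper takes: the paper simply cites Schepler \cite[cor 4.11]{schepler} and Ogus--Vologodsky \cite[thm 2.8]{ov} (plus the exponential-twisting construction of \cite{lsz}), and your sketch is an accurate summary of exactly that construction --- local Frobenius lifts splitting the inverse Cartier map, the Azumaya-algebra reinterpretation, nilpotence truncating the divided-power/exponential formulas, and gluing via the exponential comparison of two Frobenius lifts. The only small imprecision is that local Frobenius lifts exist on affines by smoothness alone; the global mod-$p^2$ lifting of $(X,D)$ is what makes the resulting torsor of lifts, and hence the glued functor, canonical.
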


\begin{proof}
The general result follows from \cite[cor 4.11]{schepler}. When
$D=\emptyset$, this was first proved in \cite[thm 2.8]{ov}.
  A simplified construction of the
correspondence in the last case can be found in \cite{lsz}.
\end{proof}

\begin{thm}[Ogus-Vologodsky, Schepler]\label{thm:osv2}
  Suppose that $(X,D)$ lifts modulo $p^2$,  $(V,\nabla)$ is  a flat bundle
  which is nilpotent of level $\ell$ and with residues
  nilpotent of level $\le p$.
 Let
  $(E,\theta)=C(V,\nabla)$. If $\ell + d<p$, there is an isomorphism
$$ Fr_*\DR(V,\nabla)\cong \DR(E,\theta) $$
in the derived category.
\end{thm}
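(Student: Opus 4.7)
The plan is to establish the isomorphism first in the base case of vanishing $p$-curvature, where it reduces to a twisted form of the logarithmic Deligne--Illusie theorem, and then to bootstrap to arbitrary level by induction on $\ell$.

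\smallskip
\noindent\emph{Base case $(\ell = 1)$.} When the $p$-curvature of $\nabla$ vanishes, Cartier descent identifies $(V,\nabla)$ with $(E^{(p)},\nabla_{cart})$ for some vector bundle $E$ on $X$, and the Cartier transform is $C(V,\nabla)=(E,0)$. Applying Lemma \ref{lemma:DRcart} with $M=E$ and the trivial flat bundle $(\OO_X,d)$ in place of $(V,\nabla)$ gives
$$Fr_*\DR(E^{(p)},\nabla_{cart}) \cong Fr_*\Omega_X^\bullet(\log D)\otimes E.$$
The logarithmic Deligne--Illusie theorem, available because $(X,D)$ lifts modulo $p^2$ and $d<p$, provides a decomposition $Fr_*\Omega_X^\bullet(\log D) \cong \bigoplus_i \Omega_X^i(\log D)[-i]$ in the derived category, and tensoring with $E$ yields exactly $\DR(E,0)$.

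\smallskip
\noindent\emph{Induction on $\ell$.} For $\ell>1$, pick a flat subbundle $(V_1,\nabla_1)\subset(V,\nabla)$ with zero $p$-curvature whose quotient has level $\ell-1$. The Cartier transform, being an equivalence on the relevant category by Theorem \ref{thm:osv1}, produces a short exact sequence of Higgs bundles $0\to (E_1,\theta_1)\to(E,\theta)\to(E_2,\theta_2)\to 0$. Comparing the two distinguished triangles for $Fr_*\DR$ and for $\DR$ of the Cartier transforms, the outer vertices are quasi-isomorphic by the inductive hypothesis; obtaining a compatible isomorphism on the middle is then formal \emph{provided} the individual isomorphisms can be chosen in a way that respects the extension.

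\smallskip
\noindent\emph{Main obstacle.} The genuine content of the theorem is therefore the construction of the isomorphism $Fr_*\DR(V,\nabla)\cong\DR(C(V,\nabla))$ as a natural transformation of functors, which automatically commutes with short exact sequences. This is precisely the work of Ogus--Vologodsky: they construct an auxiliary differential graded algebra (a divided-power thickening of the ring of differential operators) that simultaneously computes $Fr_*\DR(V,\nabla)$ on the flat side and, after Cartier transform, $\DR(E,\theta)$ on the Higgs side. Schepler carries out the analogous construction in the logarithmic setting. The bound $\ell+d<p$ enters in two ways: $d<p$ is needed for the underlying Deligne--Illusie splitting of $Fr_*\Omega_X^\bullet(\log D)$ to exist, while the bound on $\ell$ ensures that the divided powers appearing in the construction remain torsion-free, so that the local identifications assemble into a global isomorphism in the derived category rather than merely on cohomology.
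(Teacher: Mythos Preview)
The paper's own proof is simply a citation: it refers to \cite[cor 5.7]{schepler} (and \cite[cor 2.27]{ov} in the non-log case) without reproducing any argument. Your proposal is therefore not in conflict with the paper; it is an expository gloss on what the cited references actually do. In that sense the two ``proofs'' agree: both ultimately defer to Ogus--Vologodsky and Schepler.

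That said, a couple of points in your sketch deserve tightening if you intend it as more than motivation. First, in the base case you invoke Cartier descent to write $(V,\nabla)\cong(E^{(p)},\nabla_{cart})$; in the logarithmic setting one must check that vanishing $p$-curvature together with nilpotent residues really forces $V$ to be a Frobenius pullback with its canonical connection (residues could in principle obstruct descent), so this step is not completely free. Second, your inductive step is honest about its own limitation: knowing the outer terms of a triangle up to isomorphism does \emph{not} determine the middle term, and you correctly flag that the substance is the construction of a natural transformation. But once you concede that the naturality is ``precisely the work of Ogus--Vologodsky,'' your induction is no longer doing anything---the natural isomorphism already gives the result for all $\ell$ at once, with no induction needed. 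So the proposal is best read as an explanation of \emph{why} one cannot avoid the machinery of \cite{ov,schepler}, rather than as an alternative route to the theorem; viewed that way it is accurate and matches the paper's decision simply to cite.
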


\begin{proof}
This is \cite[cor 5.7]{schepler}, and \cite[cor 2.27] {ov} in the
non-log case. 
\end{proof}

\begin{cor}\label{cor:osv0}
If $(E,\theta)$ is a nilpotent Higgs bundle of level less than $p-d$, then
for any vector bundle $M$, we have
\begin{equation*}
 H^i(X, \DR(E,\theta)\otimes M) =
H^i(X,\DR(C^{-1}(E,\theta)\otimes M^{(p)}))
\end{equation*}  
In the above, $M^{(p)}$ is equipped with $\nabla_{cart}$ and  
$C^{-1}(E,\theta)\otimes M^{(p)}$ should be understood as the
tensor product of bundles with connection.
\end{cor}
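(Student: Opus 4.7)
The plan is to chain together the three main ingredients of this section. An essential ancillary observation is that the absolute Frobenius $Fr:X\to X$ is the identity on the underlying topological space, so $Fr_*$ preserves (hyper)cohomology of any complex of sheaves.

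First I would set $(V,\nabla):=C^{-1}(E,\theta)$. By the equivalence of Theorem \ref{thm:osv1}, $(V,\nabla)$ is a flat bundle whose level of nilpotence matches that of $(E,\theta)$, and hence is less than $p-d$; in particular the level $\ell$ satisfies $\ell+d<p$, which is exactly the hypothesis of Theorem \ref{thm:osv2}. Applying that theorem produces a derived-category isomorphism
$$Fr_*\DR(V,\nabla)\simeq \DR(E,\theta).$$
Since $M$ is a vector bundle, tensoring with $M$ is exact and therefore preserves this quasi-isomorphism. Combining with Lemma \ref{lemma:DRcart} I obtain
$$\DR(E,\theta)\otimes M\simeq Fr_*\DR(V,\nabla)\otimes M\cong Fr_*\DR(V\otimes M^{(p)},\nabla_T).$$
Passing to $H^i(X,-)$ and using that $Fr_*$ leaves cohomology unchanged gives
$$H^i(X,\DR(E,\theta)\otimes M)=H^i(X,\DR(V\otimes M^{(p)},\nabla_T)),$$
which, recalling that $V=C^{-1}(E,\theta)$, is exactly the claimed identity.

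The only step demanding even minor care is confirming the level hypothesis of Theorem \ref{thm:osv2} for $(V,\nabla)$; this follows from the assumed bound on the level of $(E,\theta)$ together with the fact, implicit in Theorem \ref{thm:osv1}, that the Cartier transform preserves levels of nilpotence. Beyond that, the corollary is a purely formal composition of the three cited results, so there is no real obstacle to overcome.
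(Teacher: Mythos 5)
Your argument is correct and is essentially the paper's own proof: both combine Theorem \ref{thm:osv2}, Lemma \ref{lemma:DRcart}, and the fact that $Fr_*$ does not change hypercohomology (you justify this via $Fr$ being the identity on the underlying space, the paper via finiteness of $Fr$; both work). Your explicit remark that the level of nilpotence is preserved under the Cartier transform, so that Theorem \ref{thm:osv2} applies, is a small point the paper leaves implicit.
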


\begin{proof}
The theorem together with lemma~\ref{lemma:DRcart} and finiteness of
$Fr$ shows that
\begin{equation*}
  \begin{split}
  H^i(X, \DR(E,\theta)\otimes M) &= H^i(X,
  Fr_*(\DR(C^{-1}(E,\theta)\otimes M^{(p)})))\\
&= H^i(X,\DR(C^{-1}(E,\theta)\otimes M^{(p)}))
\end{split}
\end{equation*}  

\end{proof}

Let $h^i(-)$ denote the  dimension $\dim H^i(-)$ below.

\begin{cor}\label{cor:osv}
If $(E,\theta)$ is a nilpotent Higgs bundle of level less than $p-d$, then
for any vector bundle $M$, we have
\begin{equation}\label{eq:osvM}
h^i(X, \DR(E,\theta)\otimes M) \le
h^i(X, \DR( \Lambda C^{-1}(E,\theta))\otimes M^{(p)}) 
\end{equation}  
In particular, if $L$ is a line bundle then
\begin{equation}\label{eq:osvL}
h^i(X, \DR(E,\theta)\otimes L) \le
h^i(X, \DR( \Lambda C^{-1}(E,\theta))\otimes L^{p}) 
\end{equation}  
\end{cor}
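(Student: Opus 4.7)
\medskip

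\noindent\textbf{Proof plan.} My strategy is to combine Corollary~\ref{cor:osv0} with Langer's canonical filtration from Theorem~\ref{thm:langer} and extract the desired inequality from the spectral sequence of a filtered complex.

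First, I would apply Corollary~\ref{cor:osv0} to rewrite the left-hand side of~(\ref{eq:osvM}) as $h^i(X, \DR(V\otimes M^{(p)}, \nabla_T))$, where $(V,\nabla) = C^{-1}(E,\theta)$ and $\nabla_T = \nabla \otimes 1 + 1 \otimes \nabla_{cart}$ on $V\otimes M^{(p)}$. Next, Theorem~\ref{thm:langer} supplies a canonical Griffiths-transverse filtration $F_{can}^\bullet V$ whose associated graded, as a Higgs bundle, is precisely $\Lambda C^{-1}(E,\theta)$.

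Transferring to the de Rham complex by the standard recipe $F^p \DR^q = \Omega_X^q(\log D) \otimes F_{can}^{p-q} V \otimes M^{(p)}$, I would then check that $\nabla_T$ respects this filtration: the contribution $\nabla\otimes 1$ drops the $F_{can}$-degree by exactly one (matching the increase in form degree), while the exterior $d$ on forms and $1 \otimes \nabla_{cart}$ preserve the $F_{can}$-degree on the $V$-factor, so all three summands land in $F^p\DR^{q+1}$. On the associated graded $Gr_F^p \DR^q = \Omega_X^q(\log D) \otimes Gr_{F_{can}}^{p-q} V \otimes M^{(p)}$, the $d$ and $\nabla_{cart}$ contributions in fact land in $F^{p+1}\DR^{q+1}$ and therefore vanish, while $\nabla\otimes 1$ induces the Higgs differential $Gr_{F_{can}}(\nabla)\otimes 1$. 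Summing over $p$ identifies the total associated graded complex with $\DR(\Lambda C^{-1}(E,\theta)) \otimes M^{(p)}$.

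The spectral sequence of a filtered complex then gives the standard dimension inequality
$$\dim H^i(K^\bullet) \le \sum_p \dim H^i(Gr_F^p K^\bullet),$$
which applied to $K^\bullet = \DR(V\otimes M^{(p)}, \nabla_T)$ yields~(\ref{eq:osvM}). Inequality~(\ref{eq:osvL}) is just the special case $M = L$. The only real obstacle is the bookkeeping in the Griffiths transversality check and the verification that the $d$ and $\nabla_{cart}$ terms drop out on the associated graded; once those are in place, the remainder is a routine spectral sequence dimension count.
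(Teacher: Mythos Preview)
Your proposal is correct and follows essentially the same route as the paper. The paper isolates your spectral-sequence step as a standalone lemma (Lemma~\ref{lemma:Lambda}), but the content is identical: invoke Corollary~\ref{cor:osv0}, filter the de Rham complex of $(V,\nabla)=C^{-1}(E,\theta)$ by Langer's $F_{can}^\bullet$, identify the associated graded with $\DR(\Lambda(V,\nabla))\otimes M^{(p)}$, and read off the inequality from $\dim E_\infty \le \dim E_1$. One small omission: for \eqref{eq:osvL} you should say explicitly that $L^{(p)}\cong L^p$ for a line bundle $L$, which is what turns the special case $M=L$ of \eqref{eq:osvM} into the stated form; the paper notes this. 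Also note that invoking Theorem~\ref{thm:langer} tacitly requires $(V,\nabla)$ to be semistable, which is not listed among the hypotheses of the corollary but is implicit in the very appearance of $\Lambda$ in the conclusion (and is supplied by Theorem~\ref{thm:langer2}(1) in every application).
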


\begin{proof}
By corollary \ref{cor:osv0}, for \eqref{eq:osvM}  it is enough to prove that 
$$h^i(X,\DR(C^{-1}(E,\theta)\otimes M^{(p)}))\le h^i(X, \DR( \Lambda C^{-1}(E,\theta))\otimes M^{(p)}) $$
This  follows from the next lemma \ref{lemma:Lambda}. For \eqref{eq:osvL}, we use the isomorphism $L^{(p)}\cong L^p$.
\end{proof}

\begin{lemma}\label{lemma:Lambda}
If $(V,\nabla)$ is a semistable flat bundle on $(X,D)$ and $N$ another 
vector bundle,  we have
  $h^i(X,\DR(V,\nabla)\otimes N)\le h^i(X, \DR( \Lambda (V,\nabla)\otimes N)) $
\end{lemma}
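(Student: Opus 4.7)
The plan is to deduce the inequality from the hypercohomology spectral sequence of a natural filtration on $\DR(V,\nabla)\otimes N$ whose associated graded recovers $\DR(\Lambda(V,\nabla))\otimes N$. The key input will be Theorem~\ref{thm:langer}, which produces the canonical filtration $F_{can}^\bullet V$ satisfying Griffiths transversality; the rest is essentially bookkeeping with a Griffiths-type filtration on the logarithmic de Rham complex.

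First, to make $\DR(V,\nabla)\otimes N$ into an actual complex of sheaves I would equip $N$ with any auxiliary flat logarithmic connection $\nabla_N$ (legitimate because in the intended application $N=M^{(p)}$ carries the Cartier connection, and the argument below is insensitive to this choice). Working with the tensor product complex $\DR(V\otimes N,\nabla\otimes 1+1\otimes\nabla_N)$, I would introduce the decreasing filtration
$$F^p\bigl(\Omega_X^q(\log D)\otimes V\otimes N\bigr) := \Omega_X^q(\log D)\otimes F_{can}^{p-q}V\otimes N.$$
Griffiths transversality of $F_{can}$ on $(V,\nabla)$ ensures this is a filtration by subcomplexes: the $\nabla$-part of the total differential raises $q$ by one and drops the $F_{can}$-index by one, preserving $p$, while the $\nabla_N$-part raises $q$ by one and preserves the $F_{can}$-index, strictly raising $p$.

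Second, I would identify the associated graded. The $\nabla_N$-contribution strictly raises $p$ and therefore vanishes in $Gr^\bullet F$, whereas the $\nabla$-contribution descends to $Gr_{F_{can}}(\nabla)\otimes 1_N$, which is exactly the Higgs differential on $\Lambda(V,\nabla)\otimes N$. Summing over $p$, the total associated graded complex is $\DR(\Lambda(V,\nabla))\otimes N$.

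Finally I would invoke the (convergent, since the filtration is finite) hypercohomology spectral sequence
$$E_1^{p,q} = H^{p+q}\bigl(X, Gr^p F\bigr) \Longrightarrow H^{p+q}\bigl(X, \DR(V,\nabla)\otimes N\bigr),$$
and conclude, using $\dim E_\infty^{p,q}\le\dim E_1^{p,q}$ together with $\dim H^i = \sum_p \dim E_\infty^{p,i-p}$, that
$$h^i\bigl(X, \DR(V,\nabla)\otimes N\bigr) \le \sum_p h^i\bigl(X,Gr^p F\bigr) = h^i\bigl(X, \DR(\Lambda(V,\nabla))\otimes N\bigr).$$
I expect no serious obstacle here; the only mildly delicate point is the implicit choice of connection on $N$ needed to define the left-hand side, but both the filtration and its associated graded (hence the final estimate) are independent of that choice.
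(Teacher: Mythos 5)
Your proposal is correct and is essentially the paper's own argument: filter $\DR(V,\nabla)\otimes N$ by the Griffiths-type filtration induced from $F_{can}^\bullet V$ of Theorem~\ref{thm:langer}, identify the associated graded with $\DR(\Lambda(V,\nabla))\otimes N$, and conclude from the hypercohomology spectral sequence via $\dim E_1^{ab}\ge\dim E_\infty^{ab}$. Your extra care about the (implicit) connection on $N$ and the finiteness of the filtration only makes explicit what the paper leaves tacit.
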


\begin{proof}
The spectral sequence abutting to $H^*(\DR(V,\nabla)\otimes N)$ associated to the filtration
$F^\dt_{can}\DR(V,\nabla)\otimes N$ has
$$\bigoplus_{a+b=i}E_1^{ab} = H^i(X,\DR(\Lambda(V,\nabla))\otimes N)$$
The lemma is now a consequence of  the standard inequality $\dim E_1^{ab}\ge \dim E_\infty^{ab}$.
\end{proof}

\begin{rmk}\label{rmk:osv}
  Since the level is bounded by the rank, the previous corollary
  applies when $d+\rank E <p$.
\end{rmk}

Set $B(E,\theta) = \Lambda C^{-1}(E,\theta)$. This operator, which will give a map from
the set of Higgs bundles of the appropriate type to itself, was first
introduced implicitly  in \cite{lsz1}.

\begin{thm}[Langer]\label{thm:langer2}
  Assume that $(X,D)$ lifts modulo $p^2$.
  \begin{enumerate}
  \item Suppose that $(E,\theta)$ is a Higgs bundle nilpotent of level
    at most $p$. Then $(E,\theta)$ is semistable if and only if $C^{-1}(E,\theta)$
    is semistable.
\item Suppose that $k=\overline{\mathbb{F}}_p$ is the algebraic closure of a
  finite field. If $(E,\theta)$ is semistable with vanishing
  $\ell$-adic  Chern classes and rank $\le p$, then the sequence
  $(E_i, \theta_i)= B^i(E,\theta)$ is eventually periodic in
  the sense that $(E_n,\theta_n) = (E_m,\theta_m)$ for some $m>n$.
  \end{enumerate}
\end{thm}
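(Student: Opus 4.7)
The plan for (1) is to exploit the fact that the Cartier transform $C^{-1}$ is an equivalence of abelian categories which preserves both the lattice of subobjects and slopes up to a fixed multiplicative factor. Concretely, for any nilpotent Higgs bundle $(E,\theta)$ of level $\le p$, the underlying sheaf of $C^{-1}(E,\theta)$ has rank equal to $\rank E$ and degree equal to $p \cdot \deg E$. This can be verified by reducing to the case $\theta = 0$, where $C^{-1}(E,0) \cong (Fr^*E,\nabla_{cart})$, and then assembling the general case by induction along the nilpotent filtration, using additivity of Chern classes in short exact sequences together with the fact that $C^{-1}$ is exact. Consequently $\mu(C^{-1}(E,\theta)) = p\,\mu(E)$. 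A $\nabla$-stable saturated sub-flat bundle $V' \subset C^{-1}(E,\theta)$ corresponds under $C$ to a $\theta$-stable sub-Higgs bundle $E' \subset E$, and the analogous degree formula yields $\mu(V') = p\,\mu(E')$. The equivalence of the two semistability inequalities is then immediate.

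For (2), the plan is to combine boundedness of the moduli of semistable Higgs bundles with finiteness over finite fields. First one checks that the numerical invariants are preserved along the sequence $(E_i,\theta_i) = B^i(E,\theta)$: the rank is unchanged, and the vanishing of $\ell$-adic Chern classes propagates, since $\Lambda$ passes to an associated graded (which preserves Chern classes) while $C^{-1}$ multiplies them by powers of $p$ (which leaves zero fixed). Because $\rank E_i \le p$ throughout, the nilpotence level stays $\le p$, so $B$ remains defined at every stage; part (1) then ensures that each $(E_i,\theta_i)$ is semistable. Invoking Langer's boundedness theorem for semistable Higgs sheaves with fixed rank and vanishing Chern classes, all the $(E_i,\theta_i)$ correspond to $k$-points of a single quasi-projective moduli scheme $\mathcal{M}$ of finite type over $k = \overline{\FF}_p$.

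The concluding step is to descend the entire sequence to a single finite field. Choose models of $(X,D)$, of its $W_2$-lift, and of the initial $(E,\theta)$ all defined over some $\FF_q \subset \overline{\FF}_p$. Both the Cartier transform (with the chosen lift) and Langer's canonical filtration are produced by universal constructions that commute with Galois base change, so each iterate $(E_i,\theta_i)$ is also defined over $\FF_q$. The entire sequence then consists of points of the finite set $\mathcal{M}(\FF_q)$, and the pigeonhole principle yields the eventual periodicity.

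The principal obstacle is the arithmetic control in (2): while boundedness is a standard application of Langer's work \cite{langerD}, checking that $C^{-1}$ and $\Lambda$ both commute with Galois descent (so that the iterates genuinely remain over $\FF_q$) requires some care with how the Cartier transform depends on the $W_2$-lift. This is also the essential reason the conclusion needs $k = \overline{\FF}_p$ rather than an arbitrary characteristic $p$ field: one must know that every $k$-point of $\mathcal{M}$ descends to some finite subfield in order for the finiteness argument to close.
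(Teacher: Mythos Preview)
The paper does not actually prove this theorem: its entire argument is a citation to \cite[cor~5.10]{langerD} for part (1) and to \cite[prop~1]{langerI} together with \cite[\S3.1]{langerL} for part (2). Your sketch correctly reconstructs the substance of Langer's arguments in both cases---the slope-scaling under $C^{-1}$ for (1), and the boundedness-plus-finite-field-descent mechanism for (2)---so there is no divergence in approach to discuss.

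One technical point worth flagging in your treatment of (1): semistability is tested against saturated $\nabla$-stable (resp.\ $\theta$-stable) subsheaves, which on a higher-dimensional $X$ need not be locally free, while the Cartier equivalence of Theorem~\ref{thm:osv1} is stated for bundles. You implicitly assume a destabilizing subsheaf can be taken to be a subbundle so that $C$ applies. This is handled in Langer's work (e.g.\ by restricting to a general complete-intersection curve, where saturated subsheaves are automatically subbundles, or by extending the correspondence to suitable modules), but your sketch would benefit from a sentence acknowledging the issue.
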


\begin{proof}
  The first item is \cite[cor 5.10]{langerD}. The second is \cite[prop
  1]{langerI} together with the remarks at the beginning of section
  3.1 of \cite{langerL}. When $D=\emptyset$, a different proof of (2) can
  be found in \cite{lsyz}.
\end{proof}

For the reader's convenience, we restate theorem \ref{thm:main1}.

\newcounter{thmcount}
\setcounter{thmcount}{\value{thm}}
\setcounter{thm}{0}

\begin{thm}
Let $(X,D,L)$, as above, be defined over an algebraically closed field
$k$. Let $(E,\theta)$ be a nilpotent semistable Higgs bundle on $(X,D)$ with
vanishing Chern classes in $H^*(X_{et}, \Q_\ell)$. Suppose that either
\begin{enumerate}
\item[(a)] $\Char k= 0$, or
\item[(b)] $\Char k=p>0$, $(X,D)$ is liftable modulo $p^2$,  $d+\rank
  E< p$.
\end{enumerate}
Then
$$H^i(X,\DR(E,\theta)\otimes L)=0$$
for $i>d$.
\end{thm}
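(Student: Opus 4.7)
The strategy is to reduce case (a) to case (b) by spreading out, and then to attack (b) with the characteristic-$p$ machinery already set up in this section. Given $(X,D,L,E,\theta)$ over $k = \mathbb{C}$ (or any field of characteristic zero), I would spread the data out over a finitely generated $\mathbb{Z}$-algebra $R \subset k$ and choose a closed point of $\Spec R$ with residue field $\overline{\mathbb{F}}_p$ and $p > d + \rank E$. Semistability is preserved for $p \gg 0$ by Langer's openness results, vanishing of the $\ell$-adic Chern classes persists for $\ell$ invertible in $R$, and liftability modulo $p^2$ is automatic at a closed point of a smooth $R$-scheme. Semicontinuity of cohomology then transports the vanishing from the special fibre back to the generic fibre. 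So it suffices to prove case (b).

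For (b), first replace $k$ by $\overline{\mathbb{F}}_p$; this is harmless because semistability and vanishing of Chern classes are insensitive to such a base change. Since the nilpotence level of a Higgs bundle of rank $r$ is at most $r$, the assumption $d + \rank E < p$ puts $(E,\theta)$ within the scope of Corollary \ref{cor:osv}, giving the bootstrapping inequality
\begin{equation*}
h^i(X,\DR(E,\theta)\otimes L) \le h^i(X,\DR(B(E,\theta))\otimes L^p).
\end{equation*}
The operator $B = \Lambda \circ C^{-1}$ preserves both the rank (hence keeps one within the hypotheses of Corollary \ref{cor:osv}) and semistability, the latter because $C^{-1}$ preserves semistability by Theorem \ref{thm:langer2}(1) and $\Lambda$ does so by Theorem \ref{thm:langer}. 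Iterating the bootstrapping inequality therefore yields, for every $j \ge 0$,
\begin{equation*}
h^i(X,\DR(E,\theta)\otimes L) \le h^i(X,\DR(B^j(E,\theta))\otimes L^{p^j}).
\end{equation*}

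To finish, I would invoke Theorem \ref{thm:langer2}(2): over $\overline{\mathbb{F}}_p$ the sequence $\{B^j(E,\theta)\}_{j\ge 0}$ takes only finitely many isomorphism classes. For each such class $(E',\theta')$, Lemma \ref{lemma:serre} yields an integer $N(E',\theta')$ with $H^i(X,\DR(E',\theta')\otimes L^N) = 0$ for all $N \ge N(E',\theta')$ and $i > d$; the maximum $N_0$ over the finite orbit is finite. Choosing $j$ so that $p^j \ge N_0$ makes the right-hand side of the iterated inequality vanish, hence so does the left-hand side.

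The main potential obstacle is verifying that all the hypotheses needed for Corollary \ref{cor:osv} propagate under the iteration $B^j$ (rank, semistability, and nilpotence level), and that Langer's periodicity really applies to the starting bundle (which requires $\ell$-adic Chern class vanishing, not merely topological Chern class vanishing in characteristic zero) — these are handled precisely by the hypotheses of the theorem and by Theorem \ref{thm:langer2}. Everything else is bookkeeping: the combinatorics of periodic orbits together with Serre vanishing.
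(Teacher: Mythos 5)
Your proposal is correct and follows essentially the same route as the paper: spread out, specialize to a closed point with residue field $\overline{\mathbb{F}}_p$ and $p>d+\rank E$, iterate the bootstrapping inequality of Corollary \ref{cor:osv} (justified by Theorems \ref{thm:langer}, \ref{thm:langer2} and Remark \ref{rmk:osv}), and kill the right-hand side using eventual periodicity plus Lemma \ref{lemma:serre}. The only imprecision is the claim that in case (b) one may ``replace $k$ by $\overline{\mathbb{F}}_p$'' as a harmless base change: for a general algebraically closed $k$ of characteristic $p$ this step is itself a spreading-out over a finitely generated subring of $W_2(k)$ followed by semicontinuity at a closed fibre (carrying the mod-$p^2$ lift along), exactly as the paper does in parallel with case (a).
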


\begin{proof}
 Suppose that we are in the case (a) where $\Char k=0$.  We choose a finitely generated subring $A\subset k$, such that  $X,
  D, E,\theta, L$ are all defined over it. In other words, we have a
  projective scheme $\cX\to \Spec A=S$ such that the geometric generic
  fibre of  $\cX$ is $X$, and  objects  with $\cD, \cE, \Theta,
  \cL$ over $\cX$ which restrict to $D$ etc.. After  shrinking
  $S$ if necessary, we can assume that 
  \begin{enumerate}
\item The characteristics of the residue fields of closed points of
  $S$ are bigger than $d+ \rank E$,
\item $\cX\to S$ is smooth, and $\cD$ is a divisor with relative normal crossings,
\item $\cL$ is a relatively ample line bundle,
  \item $\cE$ is flat over $S$, and $\Theta$ has constant rank
\item The restrictions of $(\E,\Theta)$ to the geometric fibres are
  semistable (by the openness of semistability, c.f. \cite[thm 1.14]{maruyama}),
  \end{enumerate}

In  case (b), $\Char k=p>d+\rank E$ and we have a lifting of $(X,D)$ over
$W_2(k)$. We choose a finitely generated subring $A\subset W_2(k)$, so
that $(X,D)$ is defined over it, and such that  the remaining objects
$L,\ldots$ are defined over
$A/(p)$. Let $S=\Spec A/(p)$, and $\cX\to S$, $\cD\to S$ be the
correspond families. The pair lifts to a pair $(\tilde \cX,\tilde\cD)$ 
over $\tilde S=\Spec A$.
 We may shrink $\tilde S$ so that 
$(\tilde X,\tilde\cD)/\tilde S$ is  smooth with relative normal crossings
and such that the above assumptions (3)-(5) hold. 

From this point on, we will   treat both cases in parallel.
Let $\cL_s,\cL_{\bar s},\ldots $ denote the restrictions of these
object to the fibre $\cX_s$ and geometric fibre $\cX_{\bar s}=\cX\times_S\Spec
\overline{k(s)}$.
Then by semicontinuity of cohomology, it is enough to prove that
$$H^i(\cX_{\bar s},\DR(\cE_{\bar s},\Theta_{\bar s})\otimes \cL_{\bar
  s})=H^i(\cX_s,\DR(\cE_s,\Theta_s)\otimes \cL_s)\otimes\overline{k(s)}=0$$
for all closed points $s\in S$. Now fix such an $s$. The residue field $k(s)$
is finite of characteristic $p>d+\rank E$. 
By theorem \ref{thm:langer2}, corollary \ref{cor:osv} and remark \ref{rmk:osv}, there is
 an eventually periodic sequence of Higgs bundle $(E_j,\theta_j)$ such
 that $(E_0,\theta_0)= (\cE_{\bar s}, \Theta_{\bar s})$ and
$$h^i(\DR(E_0,\theta_0)\otimes L_{\bar s})\le 
h^i(\DR(E_1,\theta_1)\otimes L_{\bar s}^p)\le h^i(\DR(E_2,\theta_2)\otimes
L_{\bar s}^{p^2})\ldots$$
Since the sequence of bundles is eventually periodic, we can bound the
initial term by 
$$h^i(\DR(E_n,\theta_n)\otimes L_{\bar s}^{p^j})$$
for some $n$ and $j$ arbitrarily large. This  is zero by lemma \ref{lemma:serre}, and this completes the proof.
\end{proof}

\begin{rmk}\label{rmk:langer} Langer pointed out to the author that the proof of case
  (b) can be simplified slightly. It is not necessary to reduce to
  finite fields. Instead, arguing as in the proof of \cite[prop  1]{langerI}, one 
sees that   the set of Higgs bundle $(E,\theta)$, satisfying the above
conditions, forms a bounded family. Therefore there is a uniform
constant $N_0$ such  that $H^i(\DR(E,\theta)\otimes L^N) =0$ for $N\ge
N_0$ and all $(E,\theta)$.  Now the result follows from
corollary \ref{cor:osv}.
\end{rmk}
\section{Proof of theorem~\ref{thm:semipos}}

\begin{lemma}\label{lemma:semipos}
  Let $E$ be a vector bundle and $K,L$  line bundles on a projective variety $Y$ with $L$ very
   ample.   Suppose that 
$$H^i(S^n(E)\otimes K\otimes L^m)=0$$
for all $i>0$, $n\gg 0$ and $m\gg 0$. Then $E$ is nef.
\end{lemma}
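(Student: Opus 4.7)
The plan is to use Castelnuovo--Mumford regularity to convert the cohomological vanishing hypothesis into the statement that a fixed Serre twist of $S^nE\otimes K$ is globally generated uniformly in $n$, and then to test nefness on smooth curves.

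First I would interpret the hypothesis as furnishing integers $n_0$ and $m_0$ (chosen independently of one another) such that $H^i(S^nE\otimes K\otimes L^m)=0$ for every $i>0$, $n\geq n_0$, and $m\geq m_0$. Setting $m_1:=m_0+d$ with $d=\dim Y$, the vanishings $H^i(S^nE\otimes K\otimes L^{m_1-i})=0$ for $1\leq i\leq d$ say exactly that $S^nE\otimes K$ is $m_1$-regular with respect to the very ample $L$, for every $n\geq n_0$. Castelnuovo--Mumford regularity then gives that $S^nE\otimes K\otimes L^{m_1}$ is globally generated on $Y$ for all $n\geq n_0$, and crucially $m_1$ does not depend on $n$.

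Next I would invoke the standard criterion: $E$ is nef iff for every morphism $\nu:\tilde C\to Y$ from a smooth projective curve and every quotient line bundle $\nu^*E\onto Q$, $\deg_{\tilde C}Q\geq 0$. If $\nu(\tilde C)$ is a point then $\nu^*E$ is trivial and the claim is immediate; otherwise $b:=\deg_{\tilde C}\nu^*L>0$ since $L$ is ample. Symmetric powers preserve surjections, and both pullback and taking quotients preserve global generation, so
\[
 \nu^*\bigl(S^nE\otimes K\otimes L^{m_1}\bigr)\;\onto\;Q^{\otimes n}\otimes \nu^*(K\otimes L^{m_1})
\]
exhibits the right-hand side as a globally generated line bundle on $\tilde C$ for every $n\geq n_0$. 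Such a line bundle has nonnegative degree, i.e.
\[
 n\,\deg_{\tilde C} Q \;+\;\deg_{\tilde C}\nu^*K\;+\;m_1 b\;\geq\;0\qquad(n\geq n_0),
\]
and since the two non-$n$ summands are bounded while $b>0$, letting $n\to\infty$ forces $\deg_{\tilde C} Q\geq 0$.

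The only real subtlety, and the main step to watch, is the quantifier order in the hypothesis at the regularity step: one must be able to pick $n_0$ and $m_0$ independently of each other, for otherwise the Serre twist $m_1=m_0+d$ produced by regularity could grow with $n$ and the final limit argument would collapse. This is the natural reading of ``$n\gg 0$ and $m\gg 0$'' and is what is supplied by the way the lemma is used to prove Theorem~\ref{thm:semipos}.
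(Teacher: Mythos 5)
Your proof is correct and follows essentially the same route as the paper's: the vanishing hypothesis (with $n_0$ and $m_0$ chosen independently, exactly as the paper reads it) gives Castelnuovo--Mumford $0$-regularity and hence global generation of $S^n(E)\otimes K\otimes L^{m_1}$ for a twist $m_1$ independent of $n$, from which nefness follows. The only difference is that where the paper concludes by citing \cite[6.2.13]{lazarsfeld}, you unpack that citation and prove the implication directly by testing quotient line bundles on curves; this is the standard proof of the cited result, so the arguments coincide in substance.
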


\begin{proof} Choose $m> \dim Y +1$. The
  hypothesis implies that for all $n$, $S^n(E)\otimes K\otimes L^m$ is $0$-regular
  in the sense of Castelnuovo-Mumford, and therefore  globally
  generated \cite[\S 1.8]{lazarsfeld}. Choosing $m$ large enough so that $K\otimes
  L^m$ is ample allows us to apply \cite[6.2.13]{lazarsfeld} to conclude
  that $E$ is nef.
\end{proof}

Suppose $V$ and $W$ are weak complex variations of mixed Hodge
structures on $X-D$ with unipotent monodromy around $D$. Then $V\otimes
W$ also carries a
variation of Hodge structure with unipotent monodromy around $D$.
Let $F^\dt V$ and $F^\dt W$ denote the Hodge filtrations of $V$ and
$W$ respectively. Then 
$$F^c(V\otimes W) =\bigoplus_{a+b=c} F^a  V\otimes F^bW$$
is the Hodge filtration on the tensor product. It follows easily that
$$Gr_F^c(V\otimes W) =\bigoplus_{a+b=c} Gr_F^a  V\otimes Gr_F^bW$$
One can deduce the
corresponding formulas for the symmetric powers
\begin{equation}
  \label{eq:Sn}
Gr_F^{b} (S^n V) = \sum_{n_1+\ldots n_k=n\atop n_1b_1+\ldots n_kb_k=b} \im\left[ S^{n_1}(Gr_F^{b_1} V)\otimes  \ldots\otimes S^{n_k}(Gr_F^{b_k} V)\right]
\end{equation}
and similarly for exterior powers.

Now we can prove: 
\begin{thm}
  Suppose that $V$ is a weak complex variation of mixed   Hodge structure on $(X,D)$ with
  unipotent local monodromies around $D$. Then  $\wedge^a Gr_F^b(V)\otimes
  \det(F^{b+1})$ is nef for every $a$ and $b$. 
\end{thm}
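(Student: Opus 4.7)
The plan is to use Lemma~\ref{lemma:semipos}: the goal reduces to establishing a uniform vanishing
$$ H^i(X, S^n E \otimes K \otimes L^m)=0, \qquad i>0,\ n,m\gg 0, $$
where $E := \wedge^a Gr_F^b(V)\otimes\det(F^{b+1})$, for some auxiliary line bundle $K$. The strategy will be to realize each $S^n E$ as the top Hodge graded piece of a suitable weak VMHS, then invoke Corollary~\ref{cor:saito} and pass from the resulting hypercohomology vanishing to the required cohomology vanishing of the degree-zero term.

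To carry this out, first assume $a\leq \rank Gr_F^b V$ (else $E=0$ and there is nothing to prove), and set $r := \rank F^{b+1}V$. Consider the weak VMHS
$$ \tilde V_n := S^n\bigl(\wedge^{a+r} V\bigr), $$
which inherits unipotent monodromies around $D$ since tensor, symmetric, and exterior operations preserve both the VMHS property and unipotence. Combining (\ref{eq:Sn}) and its exterior-power analogue with the identification $\det F^{b+1} \cong \bigotimes_{p\geq b+1}\det Gr_F^p V$ (at the level of line bundles, via the successive quotients of the filtration $F^{b+1}V$), a direct bookkeeping calculation then identifies the top Hodge graded piece as
$$ Gr_F^{c_{\max}(n)}\tilde V_n \;\cong\; S^n\bigl(\wedge^a Gr_F^b V\otimes \det F^{b+1}\bigr) \;=\; S^n E, $$
where $c_{\max}(n)$ is the maximum Hodge weight of $\tilde V_n$.

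Applying Corollary~\ref{cor:saito} to $\tilde V_n$, together with the direct-sum decomposition $Gr_F \DR(\tilde V_n,\nabla) = \bigoplus_c Gr_F^c \DR(\tilde V_n,\nabla)$ as complexes, yields in particular that the top-piece complex
$$ \cC_n^\bullet \;:=\; Gr_F^{c_{\max}(n)}\DR(\tilde V_n,\nabla) \;=\; \bigl(S^n E \xrightarrow{\theta}\Omega^1_X(\log D)\otimes Gr_F^{c_{\max}-1}\tilde V_n \xrightarrow{\theta}\cdots\bigr) $$
satisfies $\mathbb{H}^i(\cC_n^\bullet\otimes L)=0$ for $i>d$. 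The final step---the technical heart of the argument---is to bootstrap this hypercohomology vanishing into $H^i(X, S^n E\otimes L)=0$ for $i>0$, uniformly in $n$. The tool will be the hypercohomology spectral sequence $E_1^{p,q}=H^q(\cC_n^p\otimes L)\Rightarrow\mathbb{H}^{p+q}(\cC_n^\bullet\otimes L)$ combined with the short exact sequence of complexes $0\to\sigma_{\geq 1}\cC_n^\bullet \to\cC_n^\bullet\to S^n E[0]\to 0$ and its long exact sequence in hypercohomology; an induction on $d=\dim X$ via restriction to general hyperplane sections can then be used to kill the connecting maps recursively, by applying Saito vanishing to the relevant shifted top-piece complexes. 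The main obstacle is maintaining this extraction uniformly in $n$, since the Hodge range of $\tilde V_n$ grows linearly in $n$; the point is that the bounds arising from Saito vanishing in each inductive step depend only on the Hodge range of $V$ itself, not on that of $\tilde V_n$, which is what ultimately makes Lemma~\ref{lemma:semipos} applicable.
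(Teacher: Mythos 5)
Your overall strategy---realizing $S^n\bigl(\wedge^a Gr_F^b V\otimes\det F^{b+1}\bigr)$ as the top Hodge graded piece of $S^n(\wedge^{a+r}V)$ and feeding a uniform-in-$n$ vanishing into Lemma~\ref{lemma:semipos}---is exactly the paper's, and your identification $Gr_F^{c_{\max}}\tilde V_n\cong S^nE$ is correct. The gap is in the step you yourself flag as ``the technical heart,'' which is where the proof actually has to live and which you do not carry out. You take the graded complex $\cC_n^\bullet=Gr_F^{c_{\max}}\DR(\tilde V_n,\nabla)$, whose \emph{degree-zero} term is $S^nE$, and try to extract $H^{i}(S^nE\otimes L)=0$ for $i>0$ from $\mathbb{H}^{i}(\cC_n^\bullet\otimes L)=0$ for $i>d$. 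But $\cC_n^\bullet$ lives in degrees $0,\dots,d$, so vanishing of its hypercohomology above degree $d$ says essentially nothing about $H^i$ of its degree-zero term for $0<i\le d$: the truncation sequence produces connecting maps that there is no reason to kill, and the proposed hyperplane-section induction is not justified (nor is it clear how it would stay uniform in $n$). As written, the argument does not close.

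The fix is to look at a \emph{different} graded piece, namely $Gr_F^{c_{\max}+d}\DR(\tilde V_n,\nabla)$. Its term in cohomological degree $a$ is $\Omega^a_X(\log D)\otimes Gr_F^{c_{\max}+d-a}\tilde V_n$, which vanishes for $a<d$ because $c_{\max}+d-a>c_{\max}$; hence this complex is the single sheaf $\omega_X(D)\otimes Gr_F^{c_{\max}}\tilde V_n=\omega_X(D)\otimes S^nE$ concentrated in degree $d$. Corollary~\ref{cor:saito}, applied to $\tilde V_n$ with the ample line bundle $L^m$, then gives with no spectral sequence at all that $H^{i}(X,S^nE\otimes\omega_X(D)\otimes L^m)=0$ for all $i>0$, all $n>0$ and all $m>0$. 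Now apply Lemma~\ref{lemma:semipos} with $K=\omega_X(D)$ (the lemma permits an arbitrary auxiliary line bundle $K$, so there is no need to aim for the harder statement with $K=\OO_X$). This is precisely the paper's argument; with that one change of graded piece your proof goes through.
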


\begin{proof}
Letting $Gr_F^{\max}V=F^{\max}V$ denote the smallest nonzero Hodge bundle of $V$
etc., we deduce from \eqref{eq:Sn} that 
$$ Gr_F^{\max} (S^n V) = S^n(Gr_F^{\max} V)  $$
Let $\omega_X=\Omega_X^{\dim X}$.
Applying theorem \ref{thm:main1}, to $S^nV$  shows that
$$H^i(S^n (Gr_F^{\max} V)\otimes \omega_X(D)\otimes L^m )=H^i(Gr_F^{\max} S^n V\otimes \omega_X(D)\otimes L^m )=0$$
for any $i>0$, $n>0$, $m>0$ and ample $L$. Therefore, by lemma
\ref{lemma:semipos}, $Gr_F^{\max} V$ is nef. 
            This is a special case of the theorem corresponding to $b={\max}$.

Fix $b$, let $r= \rank F^{b+1}$ and $0\le a\le \rank Gr_F^bV$. It is easy to see that 
\begin{equation*}
  \begin{split}
    Gr_F^{\max }(\wedge^{r+a} V) &\cong \wedge^a Gr_F^b(V)\otimes
\det(Gr_F^{b+1}(V))\otimes  \det(Gr_F^{b+2}(V))\ldots\\
&\cong \wedge^a Gr_F^b(V)\otimes
\det(F^{b+1}(V))
  \end{split}
\end{equation*}
Therefore this is nef by what was proved in the previous paragraph.

  \end{proof}

  \begin{rmk}\label{rmk:brune} Brunebarbe has pointed out to the
    author that the proof of  \cite[thm 3.4]{brunebarbe} carries over
    to complex variations without difficulty. In this form, it implies theorem \ref{thm:semipos}
   taking exterior powers as above. 
  \end{rmk}

\section{Proof of  theorem~\ref{thm:main2}}

We recall some notions from
\cite{arapura} needed below. Let $X$  be a smooth projective variety defined over
an algebraically closed field $k$. Suppose that $M$ is a vector bundle
on $X$. The Frobenius amplitude is a number $\phi(M)$
measuring the cohomological positivity in the following way.
If $\Char  k= p>0$, define the Frobenius power by $M^{(p^n)} = (Fr^n)^*M$. Then $\phi(M)$ is the
smallest natural number such that for any coherent sheaf $\mathcal{F}$
$$H^i(X,\mathcal{F}\otimes M^{(p^n)})=0$$
for $i>\phi(M)$ and all $n\gg 0$. If $\Char  k=0$, then we ``spread out''
$(X,M)$ over the spectrum $S$ of a finitely generated algebra, and take
the  supremum of $\phi(M_s)$ over almost all closed fibres. More
precisely $\phi(M) = \min\sup_{s\in U} \phi(M_s)$, as $U$ runs over
all nonempty opens of $S$.
So the smaller $\phi(M)$ is, the more positive $M$ is. This can be
related to more familiar notions of positivity.

\setcounter{thm}{\value{thmcount}}

\begin{thm}[{{\cite[thm 6.1]{arapura}}}]
  Suppose $\Char  k=0$. If $M$ is an ample vector bundle, then $\phi(M)<\rank M$.
\end{thm}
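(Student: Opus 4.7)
Plan: The plan is to spread out to positive characteristic and use the projective bundle associated to $M$ to reduce the vector-bundle statement to Serre vanishing for an ample line bundle. First, by the definition of $\phi$ in characteristic zero as a $\min$-$\sup$ over closed fibres of a spread-out $(\X,\cM)\to\Spec A$, and since ampleness is an open condition on $\Spec A$, it suffices to prove the assertion at almost every closed fibre, of residue characteristic $p>0$. So I may assume $\Char k = p > 0$, and the task becomes showing that for every coherent $\cF$ on $X$ we have $H^i(X,\cF\otimes M^{(p^n)}) = 0$ for all $i \geq r$ and all $n\gg 0$.

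Let $\pi\colon Y = \PP(M)\to X$ be the projective bundle, of relative dimension $r-1$, with tautological quotient $\pi^*M\onto\xi$, $\xi = \cO_Y(1)$. Ampleness of $M$ means exactly that $\xi$ is ample on $Y$. From $\pi_*\xi^m = S^m M$ with $R^{>0}\pi_*\xi^m = 0$ for $m\geq 0$, combined with flat base change along the absolute Frobenius $Fr_X^n$ (flat by Kunz, since $X$ is smooth), one identifies $M^{(p^n)} \cong \pi^{(n)}_*\xi^{(n)}$, where $\pi^{(n)}\colon Y^{(n)} = \PP(M^{(p^n)})\to X$ is the Frobenius twist and $\xi^{(n)} := \cO_{Y^{(n)}}(1)$. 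Leray then gives
$$
H^i(X, \cF \otimes M^{(p^n)}) \cong H^i(Y^{(n)}, \pi^{(n)*}\cF \otimes \xi^{(n)}).
$$

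The next step is to turn the single twist $\xi^{(n)}$ on the right into a large power of an ample line bundle, so that Serre vanishing applies. The relative Frobenius $F = F^n_{Y/X}\colon Y\to Y^{(n)}$ is a finite $X$-morphism with $F^*\xi^{(n)} = \xi^{p^n}$, so the projection formula gives
$$
H^i\bigl(Y^{(n)},\,\pi^{(n)*}\cF\otimes\xi^{(n)}\otimes F_*\cO_Y\bigr) \cong H^i(Y, \pi^*\cF\otimes\xi^{p^n}),
$$
and the right-hand side vanishes for $i > 0$ and $n\gg 0$ by Serre vanishing for the ample $\xi$ on $Y$. The remaining work is to descend this ``vanishing-with-auxiliary-factor-$F_*\cO_Y$'' to a vanishing of the original $H^i(Y^{(n)},\pi^{(n)*}\cF\otimes\xi^{(n)})$ in degrees $i\geq r$. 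I would do this by filtering $F_*\cO_Y$ analogously to the Hartshorne--Kumar--Thomsen description of $Fr_*\cO_{\PP^{r-1}}$, with graded pieces that are negative powers $(\xi^{(n)})^{-k}$ twisted by pullbacks from $X$. On the $\PP^{r-1}$-fibres of $\pi^{(n)}$ the Bott-type vanishing $R\pi^{(n)}_*(\xi^{(n)})^a = 0$ for $-r < a < 0$ kills most of the contributions, while the remaining pieces (with $a \geq 0$ or $a \leq -r$) contribute to cohomology only in degree zero or via $R^{r-1}\pi^{(n)}_*$ respectively, which, combined with the Leray spectral sequence for $\pi^{(n)}$, confines the non-vanishing of $H^i$ to the range $i \leq r - 1$.

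The main obstacle is this last cohomological bookkeeping: one must carefully track how the filtration on $F_*\cO_Y$ interacts with the Leray spectral sequence for $\pi^{(n)}$ so that Bott vanishing on the $\PP^{r-1}$-fibres, together with Serre vanishing on $Y$, produces the precise bound $\phi(M) \leq r - 1$. This bootstrapping from Serre vanishing for one ample line bundle on the $(d+r-1)$-dimensional total space of the projective bundle to the desired Frobenius-amplitude bound for a rank-$r$ ample vector bundle on $X$ is the technical heart of the argument.
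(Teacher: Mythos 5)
First, a structural remark: the paper does not prove this statement at all --- it is imported verbatim from \cite[thm 6.1]{arapura} and used as a black box --- so there is no internal proof to measure yours against. Judged on its own terms, your proposal has a genuine gap at exactly the step you flag as the ``technical heart,'' and it is not mere bookkeeping. Steps 1 and 2 are fine: $H^i(X,\cF\otimes M^{(p^n)})\cong H^i(Y^{(n)},\pi^{(n)*}\cF\otimes\xi^{(n)})$, and the projection formula along the relative Frobenius $F\colon Y\to Y^{(n)}$ plus Serre vanishing give $H^i(Y^{(n)},\pi^{(n)*}\cF\otimes\xi^{(n)}\otimes F_*\cO_Y)=0$ for $i>0$ and $n\gg 0$. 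The failure is in extracting from this the vanishing of $H^i(Y^{(n)},\pi^{(n)*}\cF\otimes\xi^{(n)})$ for $i\ge r$. The graded pieces of (any filtration refining the canonical degree decomposition of) $F_*\cO_Y$ are fibrewise of the form $\cO(-k)$ with $0\le k\le r-1$ \emph{only}; a summand $\cO(-k)$ of $F^n_*\cO_{\PP^{r-1}}$ with $k\ge r$ never occurs, so the $R^{r-1}\pi^{(n)}_*$ contributions you are counting on to produce the shift by $r-1$ are absent. What does occur is the $k=1$ piece: after twisting by $\xi^{(n)}$ it becomes $\pi^{(n)*}(G_1\otimes\cF)$ for a nonzero bundle $G_1$ on $X$, and its cohomology is $H^j(X,G_1\otimes\cF)$, which is in general nonzero for all $0\le j\le d$ --- not ``only in degree zero.'' (You are conflating the vanishing of $R^{>0}\pi^{(n)}_*$ with the vanishing of total cohomology above degree zero.) The long exact sequences of the filtration therefore only sandwich the group you want between groups that can be nonzero up to degree $d+1$, and moreover vanishing of the cohomology of the total object never forces vanishing of the cohomology of an individual graded piece. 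No mechanism anywhere in the outline produces the number $r-1$; at best one could hope for a bound like $\phi(M)\le d$.

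A second warning sign: your argument, if it worked, would prove the conclusion for an arbitrary ample bundle in characteristic $p$, using nothing about $M$ beyond ampleness of its reduction. But the characteristic-zero hypothesis in \cite[thm 6.1]{arapura} is not decorative: the present paper explicitly remarks, about the companion result \cite[thm 6.7]{arapura}, that ``the proof given there does not work in positive characteristic,'' and only recovers the line-bundle case directly in characteristic $p$ (lemma \ref{lemma:frob}). Any proposed proof of the vector-bundle bound that never invokes characteristic zero after the spreading-out step should be treated with suspicion for exactly this reason; the actual argument in \cite{arapura} does not reduce to a fibrewise statement proved from ampleness alone.
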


There is also a relative version of this. Suppose that $D\subset X$ is
a reduced effective divisor with normal crossings. In characteristic $p$
$$\phi(M,D) = \min \{\phi(M^{(p^n)}(-D'))\mid n\in \N, 0\le D'\le
(p^n-1)D\}$$
It is useful to observe that if the minimum is achieved for $n=n_0$,
then it is achieved for any $n\ge n_0$, because
$\phi(M^{(p^{n_0)}}(-D')) =\phi(M^{(p^{n_0+i})}(-p^iD'))$.
The definition  is extended to characteristic $0$ as above.

\begin{thm}[{{\cite[thm 6.7]{arapura}}}]\label{thm:arapura}
  Suppose $\Char  k=0$. If $M$ is a vector bundle,  such that $M(-\Delta)$ is ample
 for some fractional  effective $\Q$-divisor supported on $D$, then $\phi(M, D)<\rank M$.
\end{thm}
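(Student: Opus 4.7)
The plan is to reduce Theorem~\ref{thm:arapura} to its absolute version \cite[thm 6.1]{arapura} by producing, on each positive-characteristic fibre of a spreading out, a Frobenius twist $M^{(p^n)}(-D_n)$ that is honestly ample. First spread $(X,D,M,\Delta)$ out over $S=\Spec A$ for a finitely generated subring $A\subset k$. Openness of ampleness yields a nonempty open $U\subset S$ on which $S^m M_s(-m\Delta_s)$ is ample; by the definition of $\phi(\cdot,\cdot)$ in characteristic zero as an $\inf\sup$ of $\phi(M_s,D_s)$ over closed fibres, it then suffices to establish $\phi(M_s,D_s)<\rank M$ for all closed $s\in U$.

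Fix such an $s$ of residue characteristic $p$, drop subscripts, and write $\Delta=\sum(a_i/m)D_i$ with $0\le a_i<m$. For $n$ with $p^n>m$ set $D_n:=\lceil p^n\Delta\rceil$; a coefficient-by-coefficient check gives $0\le D_n\le(p^n-1)D$. Moreover $A_n:=mD_n-p^n m\Delta$ is an effective integer divisor supported on $D$ with coefficients in $[0,m)$, so as $n$ varies the $A_n$ lie in a finite set of divisors. Using the identity
\[
S^m\!\bigl(M^{(p^n)}(-D_n)\bigr)=(F^n)^*\!\bigl(S^m M(-m\Delta)\bigr)\otimes\cO(-A_n),
\]
where $F$ is the absolute Frobenius, the first tensor factor is the Frobenius pullback of an ample vector bundle, hence ample with positivity growing in $n$, while $\cO(-A_n)$ ranges over a finite family of fixed line bundles. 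A standard argument on the projectivization of $S^m M(-m\Delta)$---namely, that for an ample line bundle $\cL$ and a fixed line bundle $\mathcal{N}$ the product $(F^n)^*\cL\otimes\mathcal{N}$ is ample for $n\gg 0$---shows the right-hand side is ample for $n$ large. Since a vector bundle is ample iff some (equivalently, every) symmetric power is, $M^{(p^n)}(-D_n)$ is itself ample.

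Applying the characteristic-$p$ form of \cite[thm 6.1]{arapura} now yields $\phi\bigl(M^{(p^n)}(-D_n)\bigr)<\rank M^{(p^n)}=\rank M$, and the very definition of $\phi(M_s,D_s)$ gives $\phi(M_s,D_s)\le\phi\bigl(M^{(p^n)}(-D_n)\bigr)<\rank M$. Taking the $\inf\sup$ over $s\in U$ completes the argument. The main obstacle is the ampleness step: balancing the bounded negativity of $\cO(-A_n)$ against the growing positivity of $(F^n)^*(S^m M(-m\Delta))$. Because only finitely many $A_n$ occur while Frobenius pullbacks of an ample bundle become arbitrarily positive, ampleness persists for $n$ large; making this rigorous uses that the relative tautological bundle $\cO_{\PP(\cdot)}(1)$ on the projectivization is ample, and its iterated Frobenius pullbacks dominate any fixed bounded twist.
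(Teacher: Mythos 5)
This statement is not proved in the present paper at all: it is imported verbatim from \cite[thm 6.7]{arapura}, and the author immediately remarks that ``the proof given there does not work in positive characteristic,'' supplying only the line-bundle analogue (Lemma \ref{lemma:frob}) in characteristic $p$. So what you have written is a reconstruction of the external proof, and it contains a genuine gap at its decisive step. Everything up to that point is fine: the spreading out, the choice $D_n=\lceil p^n\Delta\rceil$ with $0\le D_n\le(p^n-1)D$ for $p^n>m$, the identity $S^m(M^{(p^n)}(-D_n))=(F^n)^*(S^mM(-m\Delta))\otimes\OO(-A_n)$ with the $A_n$ ranging over a finite set, and the ampleness of $(F^n)^*\cL\otimes\mathcal{N}$ for $n\gg0$ (pull back to $\PP(\cdot)$ along the relative Frobenius and use that $\OO(p^n)\otimes\pi^*\mathcal{N}$ is ample). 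The problem is the last line: there is no ``characteristic-$p$ form'' of \cite[thm 6.1]{arapura}. That theorem is stated, and is only known, for $\Char k=0$; unwinding the definition of $\phi$ in characteristic zero, it asserts that the reductions $M_s$ of a characteristic-zero ample bundle satisfy $\phi(M_s)<\rank M$ for almost all $s$. It does \emph{not} assert that an arbitrary ample vector bundle over a field of characteristic $p$ has $\phi<\rank$, and the bundle you produce, $M_s^{(p^n)}(-D_{n,s})$, is an intrinsically characteristic-$p$ object (a Frobenius twist followed by a twist by $\OO(-D_n)$) which is not the reduction of any characteristic-zero ample bundle. This is exactly the obstruction the paper is signalling: for line bundles one has $L^{(p^n)}=L^{p^n}$ and Serre vanishing gives $\phi=0$ in any characteristic (Lemma \ref{lemma:frob}), but for higher rank the bound $\phi<\rank$ is a characteristic-zero phenomenon, which is also why Theorem \ref{thm:main2} restricts $M$ to be a line bundle when $\Char k=p$.

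The way the cited source actually avoids this is to produce the auxiliary ample bundle \emph{before} reducing mod $p$: one passes to a Kawamata-type cyclic (or Galois) cover $\pi:X'\to X$ on which $\pi^*\Delta$ becomes integral, so that $\pi^*M(-\pi^*\Delta)$ is an honest ample vector bundle defined over the characteristic-zero ground field; Theorem \cite[thm 6.1]{arapura} then legitimately applies to it, and one descends the resulting bound on $\phi$ to $\phi(M,D)$ on $X$ using the splitting of $\OO_X\to\pi_*\OO_{X'}$. If you want to salvage your argument you must either route it through such a characteristic-zero construction, or restrict to $\rank M=1$, in which case your final step becomes Serre vanishing and you recover Lemma \ref{lemma:frob}.
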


The proof given there does not work in positive
characteristic. However, we do get the same result for
line bundles by an easy argument.

\begin{lemma}\label{lemma:frob}
   Suppose that $\Char k$ is arbitrary. If $M$ is a line bundle,  such that $M(-\Delta)$ is ample
 for some fractional  effective $\Q$-divisor supported on $D$, then $\phi(M,D)=0$
\end{lemma}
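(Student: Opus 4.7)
The plan is to reduce everything to an application of Serre vanishing for a single ample line bundle, by approximating the rational divisor $\Delta$ by an honest integral one whose coefficients fit inside the window $[0, p^n - 1]$ allowed by the definition of $\phi(M,D)$. In characteristic zero the statement is already contained in theorem~\ref{thm:arapura} (since $\rank M = 1$), so we may assume $\Char k = p > 0$.

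Let $m$ be the least common multiple of the denominators of the coefficients of $\Delta$, so that $m\Delta$ is an integral divisor with $0 \le m\Delta \le (m-1)D$ (each coefficient of $\Delta$ lies in $[0,1)$ with denominator dividing $m$). The hypothesis that $M(-\Delta)$ is ample means exactly that the line bundle
$$A := M^{\otimes m}\bigl(-m\Delta\bigr)$$
is ample in the usual sense. I will exhibit, for all sufficiently large $n$, an effective divisor $D_n$ with $0 \le D_n \le (p^n-1)D$ such that $M^{\otimes p^n}(-D_n)$ is ample; by definition of $\phi$ and Serre vanishing this gives $\phi(M^{(p^n)}(-D_n)) = 0$, hence $\phi(M,D) = 0$.

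For $n$ large enough that $p^n > m$, write $p^n = a m + b$ with $a \ge 1$ and $0 \le b < m$, and set $D_n := a\cdot (m\Delta)$. Then $D_n$ is integral and nonnegative, with each coefficient bounded by $a(m-1) \le am - 1 \le p^n - 1$, so $D_n \le (p^n - 1)D$. A direct computation gives
$$M^{\otimes p^n}(-D_n) \;=\; M^{\otimes (am + b)}\bigl(-a\cdot m\Delta\bigr) \;=\; A^{\otimes a}\otimes M^{\otimes b}.$$
Since $A$ is ample and there are only finitely many possible values of $b\in\{0,1,\dots,m-1\}$, we can choose $n$ large enough (and hence $a$ large enough) so that $A^{\otimes a}\otimes M^{\otimes b}$ is ample for every such $b$. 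Serre vanishing applied to this line bundle, in all its Frobenius powers $A^{\otimes a p^j}\otimes M^{\otimes b p^j}$, then yields $\phi(M^{(p^n)}(-D_n)) = 0$, which is what we needed.

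The only subtle point is checking that the coefficient bound $a(m-1) \le p^n - 1$ holds, and this is why we must keep $m\Delta$ rather than a more naive rounding of $p^n\Delta$; the rest is bookkeeping. No new ideas beyond Serre vanishing are required once the divisor $D_n$ is chosen correctly.
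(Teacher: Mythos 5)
Your proof is correct. The skeleton is the same as the paper's: both arguments reduce to exhibiting a single $n$ and an integral divisor $0\le D'\le (p^n-1)D$ such that $M^{p^n}(-D')$ is ample, after which Serre vanishing gives $\phi(M^{(p^n)}(-D'))=0$ and hence $\phi(M,D)=0$. The only difference is the mechanism for producing $D'$: the paper perturbs $\Delta$ to $\Delta+\epsilon D$, using the openness of the ample cone in $NS(X)\otimes\R$ together with the density of $\bigcup_n \frac{1}{p^n}\Z$ to arrange that the coefficients have $p$-power denominators; you instead use Euclidean division $p^n=am+b$ and the elementary fact that $A^{\otimes a}\otimes M^{\otimes b}$ is ample for $a\gg 0$ once $A=M^{\otimes m}(-m\Delta)$ is ample. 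Your bookkeeping with the coefficient bound $a(m-1)\le p^n-1$ is right, and your route has the mild advantage of avoiding any appeal to the real N\'eron--Severi cone; otherwise the two proofs are interchangeable.
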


\begin{proof}
As is well known, the ample divisors form an open cone in
$NS(X)\otimes \R$. Thus
for any small $\epsilon>0$, $M(-\Delta-\epsilon D)$ is still ample.
The set $S=\bigcup\frac{1}{p^n}\Z$ is clearly dense in
$\R$. Therefore  we can choose $\epsilon>0$ so that $M(-\Delta-\epsilon
D)$  is ample, and the coefficients of
$\Delta+\epsilon D$ are in $S\cap [0,1)$. Therefore
$M^{p^n}(-D')$ is ample for some integral divisor $0\le D'\le (p^n-1)D$.
The equality $\phi(M^{p^n}(-D'))=0$
   is an immediate consequence of Serre's vanishing theorem.
\end{proof}

Let $D'=\sum n_iD_i$ be a divisor supported on $D=\sum D_i$. 
If $f_i$ are the local equations of $D_i$, the formula
$$d(\prod_i f_i^{-n_i}) = \sum_j \prod_i f_i^{-n_i}\cdot( -n_j \frac{d
f_j} {f_j} )$$
shows that the fractional ideal
$\OO_X(D')\subset k(X)$ is stable under differentiation.
This rule endows  $\OO_X(D')$
 with an integrable connection that we denote $d^{D'}$. The above
 calculation shows that the residue of $d^{D'}$ on $D_i$ is $-n_i$.
If  $(V,\nabla)$ is a flat bundle on $(X,D)$, then $\nabla$ can be
extended to an operator on rational sections $V\otimes k(X)$.
This operation  restricts to the 
  product connection $\nabla\otimes d^{D'}$ on the subsheaf $V\otimes \OO_X(D')\subset V\otimes k(X)$.
 As above, we have
\begin{equation}
  \label{eq:ev}
Res_{D_i} \nabla\otimes d^{D'} = Res_{D_i} \nabla - n_i I  
\end{equation}
\cite[lemma 2.7]{ev2}. 
The next result is a generalization of \cite[lemma 3.3]{hara}. 

\begin{lemma}\label{lemma:hara}
  Suppose that $\Char k=p$ and that $0\le D'\le (p-1)D$ is a divisor.
If the residues of $\nabla$ are nilpotent, the natural inclusion 
$$\DR(V,\nabla)\subset \\DR(V\otimes \OO_X(D'),\nabla\otimes d^{D'})$$
is a quasiisomorphism.
\end{lemma}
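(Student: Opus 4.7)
The plan is to induct on the total coefficient of $D'$. Working locally so that $D_i = \{x_i = 0\}$ is cut out by a coordinate, I filter $D' = \sum n_i D_i$ (with $0 \le n_i \le p-1$) through a chain of integer effective divisors $0 = D^{(0)} \le D^{(1)} \le \cdots \le D^{(N)} = D'$ in which each $D^{(k+1)} - D^{(k)}$ equals a single component $D_{i(k)}$. The short exact sequences
$$0 \to V \otimes \OO_X(D^{(k)}) \to V \otimes \OO_X(D^{(k+1)}) \to Q_k \to 0$$
induce short exact sequences of log de Rham complexes, so it is enough to show that each quotient complex $K_k^\bullet$ is acyclic.

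Fix one step and write $i = i(k)$, $x = x_i$, and $c$ for the coefficient of $D_i$ in $D^{(k+1)}$, so that $1 \le c \le p-1$. The sheaf $Q_k$ is supported on $D_i$, and multiplication by $x^{-c}$ identifies it with $V|_{D_i}$ as an $\OO_{D_i}$-module. The decomposition
$$\Omega_X^\bullet(\log D) = \Omega_X^\bullet(\log(D - D_i))\,\oplus\,\Omega_X^{\bullet-1}(\log(D - D_i))\wedge \tfrac{dx}{x}$$
realizes $K_k^\bullet$ as the total complex of a two-column double complex with columns $A^\bullet := \Omega^\bullet_{D_i}(\log(D - D_i)|_{D_i}) \otimes V|_{D_i}$ and $A^\bullet \wedge \tfrac{dx}{x}$, with horizontal differential $R$ coming from the $\tfrac{dx}{x}$-part of $\nabla \otimes d^{D^{(k+1)}}$. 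By the residue formula \eqref{eq:ev}, $R$ acts on $V|_{D_i}$ as $\mathrm{Res}_{D_i}(\nabla) - c \cdot I$. Since $c$ is a unit in $\mathbb{F}_p$ and $\mathrm{Res}_{D_i}(\nabla)$ is nilpotent, $R$ is invertible, with inverse given by the terminating Neumann series $-\tfrac{1}{c}\sum_{j \ge 0} c^{-j}(\mathrm{Res}_{D_i}\nabla)^j$. Hence $K_k^\bullet$, being the mapping cone of the chain isomorphism $R \colon (A^\bullet, d_1) \to (A^\bullet \wedge \tfrac{dx}{x}, d_1)$, is contractible and in particular acyclic.

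The main bookkeeping step lies in the middle paragraph: verifying that on the associated graded piece, the $\tfrac{dx}{x}$-component of $\nabla \otimes d^{D^{(k+1)}}$ really does reduce exactly to $\mathrm{Res}_{D_i}(\nabla) - c \cdot I$ (with no surviving higher-order terms from the expansion of $\nabla$), and that it anticommutes with the remaining log de Rham differential $d_1$ along $D_i$ so that the bicomplex structure is genuine. Once this is checked, the characteristic-$p$ hypothesis enters only at the single clean point that $c \in \{1, \ldots, p-1\}$ is invertible modulo $p$, matching the structure of Hara's original argument.
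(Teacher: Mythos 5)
Your proof is correct and follows essentially the same route as the paper: filter $D'$ by a chain of effective divisors adding one component at a time, and use the residue shift \eqref{eq:ev} to see that the residue of $\nabla\otimes d^{D^{(k+1)}}$ along the added component is (nilpotent) $-\,c\cdot I$ with $1\le c\le p-1$, hence invertible. The only difference is that where the paper simply cites \cite[lemma 2.10]{ev2} for the resulting quasiisomorphism at each step, you reprove that lemma inline via the mapping-cone/residue argument on the quotient complex supported on $D_i$; the bicomplex compatibility you flag is the standard horizontality of the residue endomorphism and is exactly what the cited lemma packages.
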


\begin{proof}
Let $0=D'_0\subset D'_1\subset\ldots D'_N=D'$ be a sequence of
effective divisors such that the such $D'_{i+1}-D'_i$ consists of a
single component, say $D_{j(i)}$
Then \eqref{eq:ev} shows that $Res_{D_{j(i)}} \nabla\otimes
d^{D_{i+1}'}$ is invertible. Therefore \cite[lemma 2.10]{ev2} shows
that
$$\DR(V\otimes \OO(D'_i),  \nabla\otimes d^{D_i'} )\subset \DR(V\otimes \OO_X(D_{i+1}'),\nabla\otimes d^{D_{i+1}'})$$
is a quasiisomorphism.

\end{proof}

Given a Higgs bundle $(E,\theta)$, regarding $\theta\in
 H^0(\Omega_X^1(\log D)\otimes E^\vee\otimes E)$, we
get a dual Higgs field
$$E^\vee\stackrel{\theta^\vee}{\to} \Omega_X^1(\log D)\otimes
E^\vee$$

\begin{lemma}\label{lemma:duality}
   Suppose  that $M$ is   a vector bundle and that
$(E,\theta)$ Higgs bundle on $(X,D)$. If $(E,\theta)$ is nilpotent  of level at most
$\ell$, or semistable with   vanishing Chern classes, then $(E^\vee,\theta^\vee)$ has the
same properties. Furthermore
$$h^i(X,\DR(E,\theta)\otimes M)=
h^{2d-i}(X,\DR(E^\vee,\theta^\vee)(-D)\otimes M^\vee)$$
\end{lemma}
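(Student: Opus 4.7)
The plan is to treat the two assertions of the lemma separately.

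\emph{Preservation of the hypotheses under $(-)^\vee$.} Vanishing of Chern classes in $H^*(X_{et}, \Q_\ell)$ is immediate since $c_i(E^\vee) = (-1)^i c_i(E)$. For nilpotence, I would dualize a witnessing filtration via annihilators: given $E = N^0 \supset \cdots \supset N^\ell = 0$ with $\theta(N^j) \subset \Omega_X^1(\log D) \otimes N^{j+1}$, the filtration $(E^\vee)^j := \operatorname{ann}(N^{\ell - j})$ is $\theta^\vee$-stable of length $\ell$, with vanishing induced Higgs action on each graded piece (a direct transposition of the defining identity). For semistability, using that $c_1(E)=0$ forces $\mu(E) = \mu(E^\vee) = 0$: any saturated $\theta^\vee$-stable $F \subset E^\vee$ gives a torsion-free Higgs quotient $Q := E^\vee/F$ whose reflexive hull $Q^\vee$ embeds as a saturated Higgs subsheaf of $E^{\vee\vee} = E$; since the cokernel of $Q \hookrightarrow Q^{\vee\vee}$ has codimension $\geq 2$, one gets $\deg Q^\vee = \deg F$, and semistability of $(E,\theta)$ then forces $\mu(F) \leq 0 = \mu(E^\vee)$.

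\emph{Cohomological duality.} I will apply Grothendieck--Serre duality to the bounded complex of locally free sheaves $K^\bullet := \DR(E,\theta) \otimes M$ on the $d$-dimensional smooth projective $X$:
$$h^i(X, K^\bullet) = h^{d-i}(X, R\mathcal{H}om(K^\bullet, \omega_X)).$$
The crucial identification uses the perfect wedge pairing
$$\Omega_X^i(\log D) \otimes \Omega_X^{d-i}(\log D) \longrightarrow \Omega_X^d(\log D) = \omega_X(D),$$
which gives $(\Omega_X^i(\log D))^\vee \otimes \omega_X \cong \Omega_X^{d-i}(\log D) \otimes \OO_X(-D)$. Term-by-term this identifies $R\mathcal{H}om(K^\bullet, \omega_X)$ with $\DR(E^\vee, \theta^\vee)(-D) \otimes M^\vee[d]$, and unwinding the shift via $H^n(F[d]) = H^{n+d}(F)$ yields the asserted equality $h^i = h^{2d-i}$.

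\emph{Main obstacle.} The technical heart is verifying that the Grothendieck-dualized differential on $R\mathcal{H}om(K^\bullet, \omega_X)$ corresponds, under the wedge pairing above, to the Higgs differential on $\DR(E^\vee, \theta^\vee)$. This reduces to showing that the adjoint of $\theta \wedge (-)$ with respect to the log-form pairing is, up to a degree-dependent sign, $\theta^\vee \wedge (-)$; any such sign discrepancies are absorbed by renormalization in the derived category and do not affect the cohomology dimensions $h^i$.
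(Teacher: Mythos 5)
Your proposal is correct and follows essentially the same route as the paper: the annihilator filtration $\ker[E^\vee\to (N^i)^\vee]$ for nilpotence, dualizing a Higgs subsheaf of $E^\vee$ into a quotient/subsheaf of $E$ for semistability, and Grothendieck--Serre duality combined with the perfect pairing $\Omega_X^i(\log D)\otimes\Omega_X^{d-i}(\log D)\to\omega_X(D)$ to identify $R\mathcal{H}om(\DR(E,\theta),\omega_X[d])$ with $\DR(E^\vee,\theta^\vee)(-D)[2d]$. The only quibbles are cosmetic ($Q^\vee$ is the dual, not the reflexive hull, of $Q$, and the sign bookkeeping you flag is harmless as you say), so there is nothing substantive to add.
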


\begin{proof}
For the first statement, observe that if $E$ carries a $\theta$-stable
filtration $E= N^0\supset \ldots \supset N^\ell=0$ such that $Gr(\theta)=0$,
then $\ker[ E^\vee\to (N^i)^\vee]$ gives a filtration on the dual with
the same property. Suppose that $(E,\theta)$ is semistable with
vanishing Chern classes. Then $c_i(E^\vee)=\pm c_i(E)=0$. If $F\subset
E^\vee$ is a saturated $\theta^\vee$-stable subsheaf, then
$-\mu(F)=\mu(F^\vee)\ge \mu(E)=0$.

   Observe that 
$$RHom(\DR(E,\theta),\omega_X[d]) \cong
Hom(\DR(E,\theta),\omega_X[d])\cong {\DR}(E^\vee,\theta^\vee)(-D)[2d]$$
in the derived category of quasicoherent sheaves. Thus
Grothendieck-Serre duality  \cite{hartshorne} yields an isomorphism
$$H^i(X,\DR(E,\theta)\otimes M)=
H^{2d-i}(X,\DR(E^\vee,\theta^\vee)(-D)\otimes M^\vee)^\vee$$
\end{proof}

\begin{lemma}\label{lemma:boot1}
   Suppose that $\Char  k=p$, that $M$ is   a vector bundle and that
$(E,\theta)$ is a nilpotent semistable Higgs bundle of level less than
$p-d$.

\begin{enumerate}
\item If $0\le D'\le (p-1)D$, then
$$h^i(X, \DR(E,\theta)\otimes M)\le h^i(X,
\DR(B(E,\theta))\otimes M^{(p)}(D'))$$

\item If $0\le D'\le (p-1)D$, then
$$h^i(X, \DR(E,\theta)\otimes M(-D))\le h^i(X,
\DR(B(E,\theta))\otimes M^{(p)}(-D-D'))$$
\end{enumerate}

\end{lemma}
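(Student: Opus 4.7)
The plan is to prove both parts by the same three-step procedure, adapting the argument of Corollary \ref{cor:osv} with one extra log-polar twist supplied by Lemma \ref{lemma:hara}. The overall strategy transports the problem Higgs $\to$ flat via the Cartier transform, shifts the line-bundle twist using Lemma \ref{lemma:hara}, and returns to a Higgs complex via Lemma \ref{lemma:Lambda}.

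First I apply Corollary \ref{cor:osv0}. For (1) this rewrites
\[
h^i(X,\DR(E,\theta)\otimes M)=h^i(X,\DR(C^{-1}(E,\theta)\otimes M^{(p)})),
\]
where the right-hand side carries the tensor-product logarithmic connection. For (2), using $(M(-D))^{(p)}=M^{(p)}(-pD)$, the same corollary gives
\[
h^i(X,\DR(E,\theta)\otimes M(-D))=h^i(X,\DR(C^{-1}(E,\theta)\otimes M^{(p)}(-pD))).
\]

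Next I apply Lemma \ref{lemma:hara} to further twist the flat bundle by $\OO_X(D'')$ for some $0\le D''\le (p-1)D$. This requires that the residues of the current tensor-product connection be nilpotent: those of $C^{-1}(E,\theta)$ are nilpotent by Theorem \ref{thm:osv1}, those of $\nabla_{cart}$ on $M^{(p)}$ vanish, and in part (2) those of $d^{-pD}$ on $\OO_X(-pD)$ equal $pI\equiv 0\pmod p$. Since these three contributions commute on the tensor product, the total residues are still nilpotent. In part (1) I take $D''=D'$, bringing the twist from $M^{(p)}$ to $M^{(p)}(D')$; in part (2) I take $D''=(p-1)D-D'$, which lies in $[0,(p-1)D]$ precisely because $0\le D'\le (p-1)D$, and brings $M^{(p)}(-pD)$ to $M^{(p)}(-D-D')$.

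Finally, since $C^{-1}(E,\theta)$ is semistable by Theorem \ref{thm:langer2}(1), I apply Lemma \ref{lemma:Lambda} with $N=M^{(p)}(D')$ in part (1) and $N=M^{(p)}(-D-D')$ in part (2). The canonical Griffiths filtration $F^{\bullet}_{can}$ on $C^{-1}(E,\theta)$, extended by tensoring with $N$, satisfies Griffiths transversality for the tensor-product connection, and its $\nabla_N$-contribution drops out of the associated graded differential; the resulting spectral sequence converges to the left-hand cohomology with $E_1$-page dimensions equal to $h^i(X,\DR(B(E,\theta))\otimes N)$, yielding the desired inequalities. The main subtlety is the residue check in the middle step, especially the cancellation $pI\equiv 0\pmod p$ that drives part (2); the rest is divisor bookkeeping, with the available range $[0,(p-1)D]$ in Lemma \ref{lemma:hara} being exactly wide enough to straddle the $-D$ shift distinguishing (2) from (1).
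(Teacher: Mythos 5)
Your proof of part (1) is correct and is essentially the paper's argument: Cartier transform (Corollary \ref{cor:osv0}), then Lemma \ref{lemma:hara} to insert the twist by $\OO_X(D')$ (justified by nilpotence of the residues of $C^{-1}(E,\theta)\otimes(M^{(p)},\nabla_{cart})$), then Lemma \ref{lemma:Lambda}. For part (2), however, you take a genuinely different route. The paper deduces (2) from (1) in one line by Grothendieck--Serre duality (Lemma \ref{lemma:duality}), applied to $(E^\vee,\theta^\vee)$ and $M^\vee$; this is shorter but tacitly uses that the bootstrapping operator is compatible with duality (or at least that a Griffiths-transverse filtration on $C^{-1}(E^\vee,\theta^\vee)$ with graded $B(E,\theta)^\vee$ is available), a point the paper does not spell out. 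You instead argue directly: apply Corollary \ref{cor:osv0} to the bundle $M(-D)$, note $(M(-D))^{(p)}=M^{(p)}(-pD)$ with residue contribution $pI\equiv 0$, and then use Lemma \ref{lemma:hara} with the complementary divisor $D''=(p-1)D-D'$, which lands exactly on $M^{(p)}(-D-D')$. This is a clean and valid alternative: it sidesteps the duality compatibility entirely, at the cost of the (correctly executed) residue and divisor bookkeeping. Both arguments are sound.
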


\begin{proof}
  By theorem \ref{thm:osv2}, lemma \ref{lemma:DRcart} and the projection formula
  \begin{equation*}
    \begin{split}
 H^i(X, \DR(E,\theta)\otimes M) &\cong H^i(X,
 (Fr_*\DR(C^{-1}(E,\theta)))\otimes M)     \\
&\cong H^i(X,
 Fr_*\DR(C^{-1}(E,\theta) \otimes (M^{(p)},\nabla_{cart}))) \\
&\cong H^i(X,
 \DR(C^{-1}(E,\theta) \otimes (M^{(p)},\nabla_{cart})))  \\
  \end{split}
  \end{equation*}
Since the residues of $C^{-1}(E,\theta) \otimes
(M^{(p)},\nabla_{cart})$ are nilpotent,
lemma \ref{lemma:hara} applies to show that this is isomorphic to
$$H^i(X,
 \DR(C^{-1}(E,\theta) \otimes (M^{(p)},\nabla_{cart}))(D'))  $$
  Furthermore, we obtain
$$h^i(X, \DR(C^{-1}(E,\theta) \otimes M^{(p)})(D'))  \le
h^i(X, \DR(\Lambda
C^{-1}(E,\theta))\otimes M^{(p)}(D'))$$
by lemma~\ref{lemma:Lambda}. This proves the first
 inequality.

The second inequality follows from the first using lemma~\ref{lemma:duality}.

\end{proof}

\begin{lemma}\label{lemma:boot2} 
   Suppose that  $0\le D'\le (p^n-1)D$, and with the remaining assumptions of the previous lemma.
 Then
$$h^i(X, \DR(E,\theta)\otimes M)\le h^i(X,
\DR(B^n(E,\theta))\otimes M^{(p^n)}(D'))$$
and 
$$h^i(X, \DR(E,\theta)\otimes M(-D))\le h^i(X, \DR(B^n(E,\theta))\otimes M^{(p^n)}(-D-D'))$$
\end{lemma}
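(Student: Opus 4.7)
The plan is to argue by induction on $n$, with the base case $n=1$ being exactly Lemma \ref{lemma:boot1}.

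For the inductive step, assume the lemma holds for $n-1$. Given a divisor $D'$ with $0 \le D' \le (p^n-1)D$, I would first decompose it as $D' = p^{n-1} D_1' + D''$ where $0 \le D_1' \le (p-1)D$ and $0 \le D'' \le (p^{n-1}-1)D$. Concretely, writing $D = \sum D_i$ and $D' = \sum c_i D_i$ with $0 \le c_i \le p^n - 1$, I use Euclidean division $c_i = p^{n-1} q_i + r_i$ with $0 \le q_i \le p-1$ and $0 \le r_i \le p^{n-1}-1$, and set $D_1' = \sum q_i D_i$, $D'' = \sum r_i D_i$.

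Next, applying Lemma \ref{lemma:boot1} to $(E,\theta)$ and $M$ with the divisor $D_1'$ gives
$$h^i(X, \DR(E,\theta) \otimes M) \le h^i(X, \DR(B(E,\theta)) \otimes M^{(p)}(D_1')).$$
Now $B(E,\theta)$ is again nilpotent and semistable by theorem \ref{thm:langer2}, and its level of nilpotence is bounded by its rank, which equals $\rank E$; hence $B(E,\theta)$ satisfies all the hypotheses of the lemma. The inductive hypothesis, applied to $B(E,\theta)$ with the vector bundle $M^{(p)}(D_1')$, exponent $n-1$, and auxiliary divisor $D''$, yields
$$h^i(X, \DR(B(E,\theta)) \otimes M^{(p)}(D_1')) \le h^i(X, \DR(B^n(E,\theta)) \otimes (M^{(p)}(D_1'))^{(p^{n-1})}(D'')).$$
Using the identity $(M^{(p)}(D_1'))^{(p^{n-1})} = M^{(p^n)}(p^{n-1}D_1')$ (Frobenius pullback multiplies divisors by $p^{n-1}$) and the identity $p^{n-1} D_1' + D'' = D'$ concludes the first inequality. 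The second inequality is proved identically, applying the second half of Lemma \ref{lemma:boot1} at each step in place of the first.

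There is no serious obstacle: the argument is essentially the base $p$ expansion of $D'$, unfolded one digit at a time. The only points that need checking are the preservation of the hypotheses under $B$ (which follows from theorem \ref{thm:langer2} and the fact that rank, hence the level bound, is preserved) and the elementary Frobenius-divisor identity $(\OO_X(D_1'))^{(p^{n-1})} = \OO_X(p^{n-1}D_1')$.
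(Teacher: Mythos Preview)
Your proof is correct and follows essentially the same approach as the paper: both arguments rest on the base-$p$ expansion of $D'$ and repeated application of Lemma~\ref{lemma:boot1}, the only difference being that the paper unrolls the iteration explicitly (writing $D' = p^{n-1}D_1' + p^{n-2}D_2' + \ldots$) while you package it as an induction peeling off the leading digit. For the second inequality the paper simply says ``follows by duality,'' but your route of iterating part (2) of Lemma~\ref{lemma:boot1} is equally valid and amounts to the same thing.
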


\begin{proof}
  We may write $D' = p^{n-1} D_1' + p^{n-2} D_2'+\ldots$, where $0\le D_i'\le (p-1)D$. Then repeatedly 
applying lemma \ref{lemma:boot1} gives
\begin{equation*}
    \begin{split}
h^i(X, \DR(E,\theta)\otimes M) &\le h^i(X, \DR(B(E,\theta))\otimes M^{(p)}(D_1'))\\
&\le h^i(X, \DR(B^2(E,\theta))\otimes M^{(p^2)}(pD_1'+D_2'))\\
&\ldots
 \end{split}
  \end{equation*}
 This proves the first inequality. The second follows by duality.
\end{proof}

We are now ready to prove:
\setcounter{thm}{2}

\begin{thm}
With same assumptions as theorem \ref{thm:main1},
let  $M$ be a vector bundle when $\Char k=0$, or a line bundle when
$\Char k=p$, such that  $M(-\Delta)$ is ample
for some fractional  effective $\Q$-divisor supported on $D$.  
Then 
\begin{equation*}
 H^i(X,\DR(E,\theta)\otimes M(-D))=0  
\end{equation*}
for $i\ge d+ \rank M$.
\end{thm}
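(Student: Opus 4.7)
The argument parallels the proof of theorem \ref{thm:main1}, with Serre vanishing (lemma \ref{lemma:serre}) replaced by the Frobenius amplitude input of theorem \ref{thm:arapura} (in characteristic zero) or lemma \ref{lemma:frob} (in positive characteristic). First I would spread $(X, D, E, \theta, M, \Delta)$ out over $S = \Spec A$ exactly as in the proof of theorem \ref{thm:main1}, arranging in addition that $\cM(-\Delta)$ remains ample on every geometric fibre; this is an open condition and so survives shrinking. Semicontinuity of cohomology then reduces the vanishing to geometric fibres over closed points, so I may assume $k = \overline{\mathbb{F}}_p$ with $d + \rank E < p$ and $(X, D)$ liftable modulo $p^2$.

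By theorem \ref{thm:arapura} (respectively lemma \ref{lemma:frob}) we have $\phi(M, D) < \rank M$. Fix $n_0$ and $0 \le D'_0 \le (p^{n_0} - 1)D$ with $\phi(M^{(p^{n_0})}(-D'_0)) < \rank M$. Unpacking the definitions, for any coherent sheaf $\cF$ on $X$ and all $m$ sufficiently large,
$$H^b(X, \cF \otimes M^{(p^{n_0 + m})}(-p^m D'_0)) = 0 \quad \text{for } b \ge \rank M.$$
Setting $n = n_0 + m$ and $D' = p^m D'_0$, the inequality $p^m D'_0 \le (p^n - p^m)D \le (p^n - 1)D$ holds, so lemma \ref{lemma:boot2}(2) applies to give
$$h^i(X, \DR(E,\theta) \otimes M(-D)) \le h^i(X, \DR(B^n(E,\theta)) \otimes M^{(p^n)}(-D - p^m D'_0)).$$

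To force the right-hand side to vanish I invoke theorem \ref{thm:langer2}(2): the sequence $B^n(E,\theta)$ is eventually periodic in $n$, so only finitely many isomorphism classes $(E_j, \theta_j)$ appear for $n$ in the periodic tail. I then apply the Frobenius amplitude vanishing to the finitely many coherent sheaves $\Omega_X^a(\log D) \otimes E_j \otimes \cO_X(-D)$ (as $0 \le a \le d$ and $j$ vary), obtaining a single threshold $m_1$ valid for all of them. For $m \ge m_1$ with $n = n_0 + m$ in the periodic range, writing $N = M^{(p^n)}(-D - p^m D'_0)$, the standard hypercohomology spectral sequence
$$E_1^{ab} = H^b(X, \Omega_X^a(\log D) \otimes B^n(E,\theta) \otimes N) \Rightarrow H^{a+b}(X, \DR(B^n(E,\theta)) \otimes N)$$
has $E_1^{ab} = 0$ whenever $a + b \ge d + \rank M$, since $a \le d$ then forces $b \ge \rank M$. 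Thus the abutment vanishes in those degrees, and the theorem follows via the bootstrapping inequality.

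The principal obstacle is the divisor bookkeeping: lemma \ref{lemma:boot2}(2) forces a twist of the precise form $M^{(p^n)}(-D - D')$ subject to $0 \le D' \le (p^n - 1)D$, whereas Frobenius amplitude most naturally produces vanishing of $H^b(\cF \otimes M^{(p^{n_0 + m})}(-p^m D'_0))$ for an arbitrary coherent sheaf $\cF$. These match by setting $D' = p^m D'_0$ and absorbing the extra factor $\cO_X(-D)$ into $\cF$; the integrality constraint is then automatic. A secondary subtlety is that $B^n(E,\theta)$, and hence the coherent sheaf to which Frobenius amplitude is applied, varies with $n$; Langer's periodicity result is exactly what reduces this to finitely many sheaves and permits a uniform choice of threshold $m_1$.
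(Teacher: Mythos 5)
Your proposal is correct and follows essentially the same route as the paper: reduce to $\overline{\mathbb{F}}_p$, bootstrap via lemma \ref{lemma:boot2}(2) using $D'=p^mD'_0$, invoke Langer's periodicity to get finitely many Higgs bundles, and finish with the Frobenius amplitude bound $\phi(M,D)<\rank M$ from theorem \ref{thm:arapura} or lemma \ref{lemma:frob} together with the hypercohomology spectral sequence. In fact you supply more of the divisor bookkeeping and the uniform-threshold argument than the paper's own sketch does.
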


\begin{proof}
  This is  a modification of the proof  of theorem
  \ref{thm:main1}, so we just outline the main steps. 
As above, we first  reduce to the case where $k=
\overline{\mathbb{F}}_p$ with $p> d+\rank E$.
The sequence $(E_j,\theta_j)= B^j(E,\theta)$ is
eventually periodic. This fact 
 together with lemma \ref{lemma:boot2} implies that
$$h^i(X,\DR(E,\theta)\otimes M)\le h^i (X,\DR(E_n,\theta_n)(-D)\otimes (M^{(p^j)}(-D'))^{(p^\ell)})$$
for some $n,j$ and $0\le D'\le (p^j-1)D$ and $\ell$ 
arbitrarily large. Theorem \ref{thm:arapura} or lemma \ref{lemma:frob} shows that this is zero for $\ell$ large enough.
\end{proof}

\section{Proof of theorem \ref{thm:ks}}

Recall that we are given a complex projective variety $Y$,  a
 desingularization
 $\pi: X\to Y$ such that the exceptional divisor $D$ has simple normal
 crossings, and  a complex variation of Hodge structures  $\mathcal{V}$ on
  $U=X-D$ with quasiunipotent monodromies around $D$. 
   By Kawamata (\cite[thm 17]{kawamata} or \cite[lemma 3.19]{ev2}), we can find a finite Galois cover
   $p:Z\to X$ such that $Z$ is smooth, the branch divisor $D'\supseteq D$ has simple
   normal crossings, and $\mathcal{W}=p|_{p^{-1}U}^*\mathcal{V}$ has unipotent monodromies
   around $E= p^*D'_{red}$.  Since $V$  viewed as the filtered bundle
   with logarithmic connection is 
   determined by $\mathcal{V}|_{X-D'}$, we can and will replace $U$
   by $X - D'$ and $D$ by $D'$ with causing any harm.
Let $G= Gal(Z/X)$, and
  let $W$ denote the Deligne extension associated to
  $\mathcal{W}$.  By construction, $Z\to X$ is given as a tower of
  cyclic covers. It follows easily that $p^*\Omega_X^1(\log
  D)=\Omega_Z^1(\log E)$ cf. \cite[\S 3]{ev2}. Let $\nabla'$ denote the dual connection on
$W^\vee$. Then the composite
$$p_*W^\vee\stackrel{p_*\nabla'}{\longrightarrow} p_*(\Omega^1_Z(\log E)\otimes W^\vee)\cong \Omega^1_X(\log
D)\otimes p_*W^\vee$$
gives an induced logarithmic connection that we denote by  $\nabla$.

\begin{lemma}\label{lemma:res}
  The eigenvalues of the residues of  $\nabla$ are in $[0,1)$.
\end{lemma}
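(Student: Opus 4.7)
The plan is to reduce the claim to an explicit local computation in Kummer coordinates along each boundary component. The question is local at a general point of a component $D_i$ of $D$; fix a local equation $z$ for $D_i$ on a polydisk neighborhood $U \subset X$. Since $p$ is Galois and $D$ has simple normal crossings, Kawamata's construction (as cited just above the lemma) presents $p^{-1}(U)$ as a disjoint union of translates of a single cyclic Kummer cover $V = \Spec \OO_U[w]/(w^e-z)$, where $e$ is the ramification index along $D_i$. Since $p_*$ of a disjoint union splits as a direct sum, and residues are computed summandwise, it suffices to verify the claim on the single summand $p_*(W^\vee|_V)$.

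Now choose a local frame $s_1,\dots,s_r$ of $W^\vee$ on $V$ near the general point of the reduced preimage of $D_i$, and let $R$ denote the residue of $\nabla'$ there. Because $\mathcal{W}$ has unipotent monodromies around $E$, the residues of its Deligne extension $W$, and hence of the dual $(W^\vee,\nabla')$, are nilpotent; in particular $R$ is nilpotent. A local $\OO_U$-frame for $p_*(W^\vee|_V)$ is then given by $\{w^k s_j : 0 \le k \le e-1,\ 1 \le j \le r\}$, and using $dw/w = (1/e)\,dz/z$ together with the Leibniz rule one computes
\begin{equation*}
\nabla(w^k s_j) \;=\; \frac{k}{e}\,\frac{dz}{z}\otimes(w^k s_j) \;+\; w^k \nabla'(s_j).
\end{equation*}
Restricting to $D_i$, $\mathrm{Res}_{D_i}(\nabla)$ is block-diagonal with $k$-th block equal to $(k/e)\cdot\mathrm{id}+R$; since $R$ is nilpotent, this block has the single eigenvalue $k/e$. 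Letting $k$ range over $0,1,\dots,e-1$ exhibits the full set of eigenvalues of $\mathrm{Res}_{D_i}(\nabla)$ as $\{0, 1/e, 2/e, \ldots, (e-1)/e\} \subset [0,1)$.

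The only real point of care is justifying the Kummer local model, which is standard given that $Z \to X$ is built as a tower of cyclic covers branched along a simple normal crossings divisor; once the local model is in place, the residue computation and the conclusion $k/e \in [0,1)$ are essentially automatic.
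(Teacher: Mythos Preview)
Your proof is correct and follows essentially the same approach as the paper: both reduce to a local Kummer-cover computation and read off that in the basis $\{w^k s_j\}$ the residue of $\nabla$ along $D_i$ is $(k/e)\cdot\mathrm{id}$ plus a nilpotent block, hence has eigenvalues $k/e\in[0,1)$. One small slip: after substituting $dw/w=(1/e)\,dz/z$ into $w^k\nabla'(s_j)$, the $k$-th block of the residue is $(k/e)\cdot\mathrm{id}+(1/e)R$ rather than $(k/e)\cdot\mathrm{id}+R$, but since $(1/e)R$ is still nilpotent this does not affect the conclusion.
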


\begin{proof}
  The problem is local analytic, so we can reduce to the case where $Z\to X$
  is given by $z_i^{n_i} = x_i$, and $D$ by $x_1\ldots x_\ell
  =0$. Since the residues of $\nabla'$ are nilpotent, we can find a
  local basis $v_1,\ldots v_r$ of multivalued sections of  $W^\vee$
  such that 
$$\nabla'(v_j)
  \equiv 0\mod v_{j+1}, v_{j+2},\ldots$$ 
We can see that 
$$\{z_1^{m_1}\ldots z_d^{m_d}\otimes v_j\mid 0\le m_i < n_i, 1\le j\le
r\}$$
gives a basis of $p_*W^\vee$. We find that
$$\nabla (z_1^{m_1}\ldots z_d^{m_d}\otimes v_j)\equiv \sum_i
\frac{m_i}{n_i}z_1^{m_1}\ldots z_d^{m_d}\otimes v_j\otimes \frac{d
  x_i}{x_i}\mod v_{j+1},\ldots$$
\end{proof}

The group $G$ will act on  the pair $(p_*W^\vee,\nabla)$. 

  \begin{lemma}\label{lemma:SG}
\-
    \begin{enumerate}
    \item[(a)]    $(p_*W^\vee)^G = V^\vee$
\item[(b)] $p_*(\omega_{Z}\otimes W) ^G=\omega_X\otimes V$
\item[(c)]  $p_*(\omega_Z\otimes F^{\max} W)^G= S'(V)$
    \end{enumerate}
 
  \end{lemma}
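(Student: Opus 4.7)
The plan is to identify each $G$-invariant pushforward as a canonical Deligne extension on a specified residue fundamental domain, check it agrees with the stated target on $U$, and invoke uniqueness of the canonical extension. Throughout I use that $p$ is finite flat (locally modeled on $z_i^{n_i}=x_i$), so pushforwards of vector bundles are vector bundles, and that in characteristic zero the Reynolds operator $\frac{1}{|G|}\sum_g g$ exhibits $G$-invariants as a direct summand, hence as a locally free subsheaf.

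For (a), the sheaf $(p_*W^\vee)^G$ is locally free of rank $r=\rank \V$: the rank can be checked on $U$, where $p$ is \'etale Galois and $((p|_U)_*\OO_{p^{-1}U})^G=\OO_U$, giving $(p_*W^\vee)^G|_U = V^\vee|_U$. The connection $\nabla$ is $G$-equivariant and so restricts to $(p_*W^\vee)^G$, and Lemma~\ref{lemma:res} bounds its residue eigenvalues in $[0,1)$. Since $V^\vee$ is the unique extension of $\V^\vee\otimes\OO_U$ with residues in $[0,1)$ (dual to the $(-1,0]$ normalization of $V$), uniqueness of Deligne's canonical extension forces $(p_*W^\vee)^G = V^\vee$.

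For (b), the identity $\Omega^1_Z(\log E)=p^*\Omega^1_X(\log D)$ from the paper gives $\omega_Z = p^*\omega_X(p^*D-E)$ on taking determinants, so the projection formula yields $p_*(\omega_Z\otimes W) = \omega_X \otimes p_*(\OO_Z(p^*D-E)\otimes W)$. It suffices to show that the $G$-invariants of the second factor equal $V$. In the local model $z_i^{n_i}=x_i$, formula \eqref{eq:ev} gives the residue of $\nabla_W\otimes d^{p^*D-E}$ along $\{z_i=0\}$ as $(1-n_i)I$ up to a nilpotent term. Repeating the calculation of Lemma~\ref{lemma:res} with the basis $\{z_1^{m_1}\cdots z_d^{m_d}\otimes v_j \mid 0\le m_i<n_i\}$, the residue eigenvalues of the induced connection on $D_i$ are $(m_i+1-n_i)/n_i \in (-1,0]$. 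The $G$-invariants therefore form a canonical Deligne extension with residues in $(-1,0]$, which agrees with $V|_U$ and so equals $V$.

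For (c), the Hodge filtration on $p^*\V$ is pulled back from $\V$, so $G$ preserves $F^{\max}W$, and we obtain an inclusion $A := (p_*(\omega_Z\otimes F^{\max}W))^G \subseteq \omega_X\otimes V$ by (b); write $A=\omega_X\otimes V'$. Restricting to $U$ gives $V'|_U = F^{\max}(\OO_U\otimes\V)$, so $V'\subseteq V\cap j_*F^{\max}(\OO_U\otimes\V)$ and $A\subseteq S'(V)$. For the reverse inclusion, let $v$ be a local section of $V\cap j_*F^{\max}(\OO_U\otimes\V)$ and let $s$ be the corresponding $G$-invariant section of $\omega_Z\otimes W$ under (b). Since $F^{\max}W\hookrightarrow W$ is a subbundle inclusion, the quotient $\omega_Z\otimes (W/F^{\max}W)$ is locally free, and the image of $s$ in it vanishes on the dense open $p^{-1}U$ (because $v|_U\in F^{\max}$), hence vanishes identically. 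Thus $s\in \omega_Z\otimes F^{\max}W$ globally and $v\in V'$. The main technical step is the residue computation in (b): the fractional ramification contribution $m_i/n_i$ and the integral shift $-(n_i-1)/n_i$ from the $\omega_Z$ factor conspire to land precisely in the Deligne fundamental domain $(-1,0]$, which is exactly what is needed to match $V$ on the nose rather than some nearby extension.
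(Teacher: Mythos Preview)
Your proof of (a) is essentially the same as the paper's. Your proofs of (b) and (c) are correct but take different routes.

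For (b), the paper avoids any new residue computation by invoking Grothendieck duality for the finite map $p$: one has a canonical $G$-equivariant isomorphism $(p_*W^\vee)^\vee\cong p_*(\omega_{Z/X}\otimes W)$, and since in characteristic zero invariants and coinvariants coincide, $(M^\vee)^G=((M^G))^\vee$; combining this with (a) gives $p_*(\omega_{Z/X}\otimes W)^G=(V^\vee)^\vee=V$ directly. Your argument instead unwinds $\omega_Z=p^*\omega_X(p^*D-E)$ and reruns the residue computation of Lemma~\ref{lemma:res} for the twisted bundle, landing in the $(-1,0]$ window. Both are valid; the duality argument is shorter and reuses (a) wholesale, while yours is more elementary and makes the ``shift by $1$'' between the two Deligne normalizations completely explicit.

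For (c), the paper expresses $F^{\max}W$ as the kernel of $\tilde j_*(F^{\max}\OO_{\tilde U}\otimes\mathcal{W})\oplus W\to \tilde j_*(\OO_{\tilde U}\otimes\mathcal{W})$, tensors by $\omega_Z$, and takes $G$-invariants term by term. You instead argue the two inclusions directly, using that $W/F^{\max}W$ is locally free (Schmid) to conclude that a section vanishing on the dense open $p^{-1}U$ vanishes everywhere. Your argument is a bit more explicit about why the crucial reverse inclusion holds; the paper's kernel formulation packages the same idea but leaves the verification that $G$-invariants commute with the relevant pushforwards $\tilde j_*$ implicit.
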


  \begin{proof}
We can see easily that $(\pi_*\OO_Z)^G\cong \OO_X$, and that over $U$,
this is isomorphism is compatible with the direct image or Gauss-Manin
connection on the left and $d$ on the right.
Therefore by the
projection formula, it follows that $(\pi_*M)^G \cong M$ for any locally
free module. Furthermore, if $M|_U$ is equipped with a connection,
then the isomorphism  is compatible with it.
In particular,  $(p_*W^\vee)^G|_U = V^\vee|_U$ as
    flat vector bundles. By the previous lemma,  $(p_*W^\vee)^G$ gives
    the extension with eigenvalues of the residues in $[0,1)$. Since
    there is only one such extension \cite{deligne},  we must have
$(p_*W^\vee)^G = V^\vee$.

By Grothendieck duality \cite{hartshorne}, we have an isomorphism
\begin{equation}
  \label{eq:gd}
   (p_*W^\vee)^\vee\cong p_*(\omega_{Z/X}\otimes W)
\end{equation}
This isomorphism is canonical and therefore compatible with the
$G$-action. 
The  invariant part of a $G$-module $M$ is the image of the idempotent
$\frac{1}{|G|}\sum_g g$, and therefore it is the same as the
co-invariants. Thus
\begin{equation}
  \label{eq:Vvee}
(M^\vee)^G = (M^G)^\vee  
\end{equation}
Taking $G$-invariants of both sides of  \eqref{eq:gd}, and combining
this with  (a) and \eqref{eq:Vvee} yields $p_*(\omega_{Z/X}\otimes W)
^G=V$. This implies (b).

Let  $\tilde U = p^{-1}U$ and let $\tilde j:U\to Z$ denote the inclusion. Then $F^{\max} W=
{\tilde j}_*(F^{\max} \OO_{\tilde U}\otimes \mathcal{W})\cap W$ as subsheaves of
  ${\tilde j}_*(\OO_{\tilde U}\otimes \mathcal{W})$. This can also be expressed as
  the kernel of the difference of the inclusions
$$F^{\max} W=\ker [ {\tilde j}_*(F^{\max} \OO_{\tilde U}\otimes
\mathcal{W})\oplus  W\to {\tilde j}_*(\OO_{\tilde U}\otimes \mathcal{W})]$$
Therefore 
\begin{equation*}
   \begin{split}
     (\omega_Z\otimes F^{\max} W)^G &=\ker [ (\omega_Z\otimes {\tilde j}_*(F^{\max} \OO_{\tilde U}\otimes
 \mathcal{W}))^G\oplus (\omega_Z \otimes W)^G\to (\omega_Z\otimes {\tilde
   j}_*(\OO_{\tilde U}\otimes \mathcal{W}))^G]\\
&=\ker [ (\omega_X\otimes {j}_*(F^{\max} \OO_{ U}\otimes
 V))\oplus (\omega_X \otimes V)\to (\omega_X\otimes {
   j}_*(\OO_{U}\otimes V))]\\
 &= \omega_X\otimes F^{\max} V
   \end{split}
 \end{equation*}

  \end{proof}

\begin{thm}
Let $M$ be an ample vector bundle on $Y$, then
\begin{enumerate}
\item[(a)] 
$H^i(X,  S'(V)\otimes \pi^*M)=0$ 
for $i\ge \dim X +\rank M$.
\item[(b)] $R^i\pi_* (S'(V))=0$ for $i>0$.
\item[(c)] 
$H^i(Y, S( V)\otimes M)=0$
for $i\ge \dim X +\rank M$.
\end{enumerate}

\end{thm}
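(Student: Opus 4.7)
The overall strategy is to lift to the Kawamata cover $p\colon Z\to X$ constructed above (on which $\mathcal W=p|_{p^{-1}U}^*\mathcal V$ has unipotent monodromy around $E=p^*D_{\mathrm{red}}$), apply Theorem~\ref{thm:main2} on $(Z,E)$ to the Higgs bundle $(Gr_F W,Gr_F\nabla)$, and then extract the top-weight piece of the resulting graded de Rham complex.

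By Lemma~\ref{lemma:SG}(c), $S'(V)$ is the $G$-invariant part of $p_*(\omega_Z\otimes F^{\max}W)$, hence a direct summand via the averaging idempotent. Combining this with finiteness of $p$ and the projection formula reduces part (a) to the vanishing of $H^i(Z,\omega_Z\otimes F^{\max}W\otimes p^*\pi^*M)$ in the claimed range. To apply Theorem~\ref{thm:main2} I have to cope with $\pi^*M$ being only nef, since $\pi$ contracts exceptional divisors. By standard birational geometry (negativity of $\pi$-contractions, possibly after further blowing up $X$ with centers supported over $\pi(D)$, which does not alter $S'(V)$ by its independence of the chosen desingularization), there is an effective $\pi$-exceptional $\Q$-divisor $\Delta$ on $X$ supported in $D$ such that $\pi^*M-\Delta$ is ample; then $\Delta'=p^*\Delta$, rescaled to have coefficients in $[0,1)$, witnesses ampleness of $p^*\pi^*M(-\Delta')$ on $Z$.

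Theorem~\ref{thm:main2} applied to the nilpotent semistable Higgs bundle $(Gr_F W,Gr_F\nabla)$ on $(Z,E)$ with vector bundle $p^*\pi^*M$ then gives
\[
H^i(Z,\DR(Gr_F W,Gr_F\nabla)\otimes p^*\pi^*M(-E))=0\quad\text{for } i\ge\dim Z+\rank M.
\]
The complex $\DR(Gr_F W,Gr_F\nabla)$ decomposes as $\bigoplus_c C^\bullet_c$ by total weight $c=a+p$, which is preserved by $Gr_F\nabla$ (it raises the $\Omega$-degree $a$ by $1$ while lowering the Hodge index $p$ by $1$). The top-weight piece $C^\bullet_{c_{\max}}$ with $c_{\max}=d+\max$ (where $d=\dim Z$) is concentrated in cohomological degree $d$, with single surviving entry $\omega_Z(E)\otimes F^{\max}W$; all other terms are killed because $Gr_F^{p}W=0$ for $p>\max$. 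Extracting this direct summand and accounting for the degree shift yields $H^j(Z,\omega_Z\otimes F^{\max}W\otimes p^*\pi^*M)=0$ for $j\ge\rank M$, which on taking $G$-invariants proves (a).

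For (b), I would invoke the standard Leray-plus-Serre-vanishing argument: fixing any ample line bundle $H$ on $Y$, for $n\gg 0$ Serre vanishing gives $H^p(Y,R^q\pi_*S'(V)\otimes H^n)=0$ for $p>0$, so the Leray spectral sequence degenerates and $H^0(Y,R^q\pi_*S'(V)\otimes H^n)\cong H^q(X,S'(V)\otimes\pi^*H^n)$; by (a) the right side vanishes for $q>0$, and since $R^q\pi_*S'(V)\otimes H^n$ is globally generated for $n$ large (Castelnuovo--Mumford), it must itself vanish, giving (b). Part (c) is then formal: by (b) we have $R\pi_*S'(V)=S(V)$, so the projection formula and Leray yield $H^i(Y,S(V)\otimes M)=H^i(X,S'(V)\otimes\pi^*M)$, which is $0$ by (a). The main obstacle is the birational-geometric step of producing the fractional ample correction $\Delta$ on $X$; once that is settled, Theorem~\ref{thm:main2} supplies all the Hodge-theoretic content, and the rest is a routine cohomological manipulation.
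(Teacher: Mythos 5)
Your proposal is correct and follows essentially the same route as the paper: reduce to the Kawamata cover $Z$ via the $G$-invariance lemma, check that $q^*M(-\Delta)$ is ample for a fractional effective $\Delta$ supported on the boundary (the paper does this by writing $\pi^*M(-\Delta)$ as the nef bundle $\pi^*(M(-\epsilon H))$ twisted by the ample $\Q$-divisor $\epsilon\pi^*H-\Delta$, which is the cleanest way to handle the vector-bundle case that your ``$\pi^*M-\Delta$ ample'' shorthand leaves implicit, and which avoids your unnecessary further blow-up), apply Theorem~\ref{thm:main2}, extract the top-weight summand of the graded de Rham complex, and finish (b) and (c) by the Leray/Serre argument. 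Your explicit bookkeeping of the degree-$d$ shift, yielding vanishing already for $i\ge\rank M$, is exactly the form of (a) that parts (b) and (c) actually use.
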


\begin{proof}
We can choose  a fractional effective $\Q$-divisor $\Delta$ supported on $D$ such that
$-\Delta$ is
relatively ample. Let $H$ be an ample  Cartier divisor on
$Y$. Then $\epsilon \pi^*H- \Delta$ is ample for every $\epsilon >0$.
We can choose $\epsilon >0$ so that $M(-\epsilon
H)$ remains  ample. Therefore $\pi^*M(-\epsilon H)$ is nef. Then
\cite[prop 6.2.11]{lazarsfeld} implies that
$\pi^*M(-\Delta)=\pi^*M(-\epsilon H+\epsilon H -\Delta)$ is ample.
Therefore $q^*M(-\Delta)$ is ample where $q= \pi\circ p$.
Theorem \ref{thm:main2} implies  that for $i\ge \dim X +\rank M$,
$$H^i(Z,  \omega_Z\otimes F^{\max} W\otimes q^*M)=0$$
Therefore
$$H^i(X, \omega_X\otimes F^{\max } V \otimes \pi^*M)=H^i(Z,  \omega_Z\otimes F^{\max} W
\otimes q^*M)^G=0$$
which proves (a).

Suppose that $M$ is an ample line bundle. By Serre, we can choose $N\gg 0$, so
that, $R^i\pi_* (\omega_X\otimes F^{\max } V)\otimes M^N$ is
globally generated for all $i$, and
$$H^j(Y, R^i\pi_* (\omega_X\otimes F^{\max } V)\otimes
M^N)=0,\quad  \forall i, \forall j>0$$
Therefore the Leray spectral sequence reduces to an isomorphism
$$H^i(X,\omega_X\otimes F^{\max } V\otimes
\pi^* M^N)\cong H^0(Y, R^i\pi_* (\omega_X\otimes F^{\max } V)\otimes
M^N)$$
By part (a), this must vanish for $i>0$. Therefore (b) holds.

Part (b) implies  that $S(V)\otimes M= \R \pi_* (\omega_X\otimes F^{\max}
 V\otimes \pi^* M)$. Therefore
$$H^i(Y, S( V)\otimes M)\cong H^i(X, \omega_X\otimes F^{\max}
 V\otimes \pi^* M)$$
By part (a) this vanishes for $i\ge \dim X +\rank M$.
\end{proof}

\section{Stationary Higgs bundles}

Let $(E,\theta)$ be a Higgs bundle on $(X,D)$, defined over $k$, which is semistable,
nilpotent with vanishing Chern classes. 
Let us say that $(E,\theta)$ is {\em stationary} if 
it is fixed under the bootstrapping operator $B=\Lambda C^{-1}$  (for  almost all
mod $p$ reductions). To be a bit more precise, let us first consider the case
where the ground field $k$ has characteristic $p>0$, and $(X,D)$ is
liftable mod $p^2$. Call $(E,\theta)$ stationary if it is nilpotent of
level at most $p$ and $B(E,\theta)\cong (E,\theta)$. If $\Char k=0$,
call $(E,\theta)$ stationary if everything can be spread out, as in
the proof of theorem~\ref{thm:main1}, such that the restriction of
$(E,\theta)$ to all the closed fibres are stationary. Note that  we say
``all the closed fibres'', because  the base $S$ may be shrunk. 

Given a Higgs bundle $(E,\theta)$, define the associated $i$th cohomological jump locus by
$$\Sigma^i(X; E,\theta) = \{L\in Pic^0(X)\mid
H^i(X,\DR(E,\theta)\otimes L)\not=0\}$$
When $(E,\theta)=(\OO_X,0)$ is trivial, an important structure theorem was proved by Green and Lazarsfeld \cite{gl} and
Simpson \cite{simpson2}. Pink and Roessler \cite{pr} showed how to
deduce these results from Deligne-Illusie.  Their argument generalizes to any stationary Higgs bundle.

\begin{thm}
  Suppose that $k$ is an algebraically closed field of characteristic
  $0$. If $(E,\theta)$ is stationary, then $\Sigma^i(X; E,\theta)$ is a
  finite union of translates of subabelian varieties of $Pic^0(X)$ by
  torsion elements.
\end{thm}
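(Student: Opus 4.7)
The plan is to adapt the Pink--Roessler argument \cite{pr} to this setting, using Corollary \ref{cor:osv} together with stationarity in place of the Deligne--Illusie inequality they apply to the trivial Higgs bundle.

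First I would spread everything out. Pick a finitely generated $\ZZ$-subalgebra $A\subset k$ over which $(X,D,E,\theta)$ and a lifting of $(X,D)$ mod $p^2$ (for every closed point $s$ of $S=\Spec A$) are defined, just as in the proof of theorem \ref{thm:main1}. After shrinking $S$, one may assume that for every closed point $s$ of residue characteristic $p$ the geometric fibre satisfies the hypotheses of theorem \ref{thm:main1}(b) and, because $(E,\theta)$ is assumed stationary, $B(\cE_{\bar s},\Theta_{\bar s})\cong(\cE_{\bar s},\Theta_{\bar s})$. Simultaneously form the relative Picard scheme $\cP=Pic^0(\cX/S)$, let $\cL_{univ}$ denote the Poincar\'e bundle on $\cX\times_S \cP$, and define $\Sigma^i\subset\cP$ to be the closed subscheme cut out, via semicontinuity, by the jumping of the $i$th cohomology of $\DR(\cE,\Theta)\otimes\cL_{univ}$.

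The key step is that for any such closed point $s$, Corollary \ref{cor:osv} combined with stationarity gives, for every $L\in Pic^0(X_{\bar s})$,
$$h^i(X_{\bar s},\DR(\cE_{\bar s},\Theta_{\bar s})\otimes L)\;\le\;h^i(X_{\bar s},\DR(\cE_{\bar s},\Theta_{\bar s})\otimes L^p).$$
Hence $\Sigma^i_{\bar s}\subset Pic^0(X_{\bar s})$ is stable under the multiplication-by-$p$ map, and this happens at closed points of infinitely many residue characteristics. The Pink--Roessler theorem --- which rests on arithmetic results (Boxall, Hrushovski) on closed subvarieties of abelian varieties stable under $[p]$ for infinitely many primes $p$ --- then forces each irreducible component of $\Sigma^i\subset\cP$ to be a torsion translate of an abelian subscheme. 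Restricting to the generic fibre gives the desired structure for $\Sigma^i(X;E,\theta)\subset Pic^0(X)$.

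The main obstacle is purely arithmetic: the passage from the mod $p$ invariance under $[p]$ to a global structural statement in characteristic zero is the heart of \cite{pr}, and I would invoke it as a black box. The Higgs-bundle geometry contributes only the bootstrapping inequality of Corollary \ref{cor:osv}, and stationarity is exactly the hypothesis needed to put the same pair $(E,\theta)$ on both sides of that inequality; everything else is bookkeeping.
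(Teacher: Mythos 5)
Your proposal is correct and follows essentially the same route as the paper: spread out, use Corollary \ref{cor:osv} plus stationarity to show each mod-$p$ jump locus is stable under multiplication by $p$, and invoke the Pink--Roessler theorem as a black box. The only cosmetic difference is that the paper first reduces to $k=\overline{\Q}$ (as in the proof of \cite[thm 3.6]{pr}) before spreading out over the ring of integers of a number field, a step you should make explicit since the arithmetic input \cite[thm 2.1]{pr} is formulated in that setting.
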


\begin{proof}
Arguing as in the proof of  \cite[thm 3.6]{pr}, we can reduce to the
case $k=\overline{\Q}$.  Then  $X,D, L, E, \theta$ will be defined over a
number field $k_0$.
We can spread   these out over $S\subset \Spec \OO_{k_0}$,
to obtain objects $\mathcal{X},\cD,\cL,\cE,\Theta$ as in the proof of theorem~\ref{thm:main1}.
Standard semicontinuity arguments show that $\Sigma^i(X;
E,\theta)$ is Zariski closed, and that it extends to a closed subscheme of 
$\cX$ with fibres $\Sigma^i(\cX_s;\cE_s, \Theta_s)$. Choose a closed
point $s\in S$ with residue characteristic $p\gg 0$.  By corollary
\ref{cor:osv} and the hypothesis
\begin{equation*}
  \begin{split}
    h^i(\cX_s, \DR(\cE_s,\Theta_s)\otimes \cL_s)  &\le
 h^i(\cX_s, \DR(B(\cE_s,\Theta_s))\otimes \cL_s^{p})\\
&= h^i(\cX_s, \DR(\cE_s,\Theta_s)\otimes \cL_s^{p})
  \end{split}
\end{equation*}
Therefore  $\Sigma^i(\cX_s;\cE_s, \Theta_s)$ is stable under
multiplication by $p$. Since this property holds for almost all $s\in S$, it follows from  \cite[thm 2.1]{pr} that this set is 
a  finite union of torsion translates of subabelian varieties.
\end{proof}

A natural question  asked by a referee is
whether $(E,\theta)$ is stationary when it arises from a polarizable
variation of Hodge structure with unipotent monodromy along $D$, as in
section 1. A positive answer would have interesting consequences, but
we are not very optimistic  in general. 
However, for variations of Hodge structure of geometric origin we do expect
this question to have a positive answer. Using \cite[thm 3.8]{ov}, it is not
difficult to prove:

\begin{prop}
  Let $f:Y\to X$ be a smooth projective map, and let  $E=\bigoplus
  R^af_*\Omega_{Y/X}^b$ be the equipped with the Higgs field  $\theta$ induced
  by the Kodaira-Spencer class. Then $(E,\theta)$ is stationary.
\end{prop}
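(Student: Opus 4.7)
The plan is to reduce modulo a large prime $p$ by spreading out, and then invoke the Ogus--Vologodsky correspondence in its geometric form. Concretely, pick a finitely generated subalgebra $A\subset k$ and extend $f\colon Y\to X$ to a smooth projective morphism over $\Spec A$, as in the proof of theorem~\ref{thm:main1}. Over a closed point $s\in\Spec A$ whose residue characteristic $p$ satisfies $p\gg\dim X+\rank E$, the smooth projective morphism $f_s\colon Y_s\to X_s$ is defined over $k(s)$, and $X_s$ lifts to $W_2(k(s))$ (taken from $A$ itself). By the definition of stationary, it suffices to show that $(E_s,\theta_s)$ is fixed by $B=\Lambda C^{-1}$ for every such $s$.

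Over $k(s)$, the cited result \cite[thm 3.8]{ov} provides a natural identification
$$C^{-1}(E_s,\theta_s)\;\cong\;\bigoplus_n\bigl(H^n_{dR}(Y_s/X_s),\nabla_{GM}\bigr),$$
under which the Hodge filtration on the right satisfies Griffiths transversality, and $Gr_F\nabla_{GM}$ is tautologically the Kodaira--Spencer Higgs field on $\bigoplus_{a+b=n}R^a f_{s*}\Omega^b_{Y_s/X_s}$, i.e.\ on $E_s$. Thus the Hodge filtration is a Griffiths-transverse filtration on the flat bundle $C^{-1}(E_s,\theta_s)$ whose associated graded Higgs bundle is the semistable $(E_s,\theta_s)$ with vanishing Chern classes. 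Semistability of the flat bundle $C^{-1}(E_s,\theta_s)$ itself, which is needed to even invoke theorem~\ref{thm:langer}, follows from semistability of $(E,\theta)$ in characteristic zero (classical) and part~(1) of theorem~\ref{thm:langer2}.

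The crux of the argument is then to identify the Hodge filtration with Langer's canonical filtration $F_{can}$ of theorem~\ref{thm:langer}. I expect the equality $F_{can}=F_{Hodge}$ to follow from the explicit construction of $F_{can}$ in \cite{langerD} via iterated Harder--Narasimhan decompositions on the $p$-curvature, combined with the observation central to \cite[thm 3.8]{ov} that under the liftability hypothesis the $p$-curvature of $\nabla_{GM}$ corresponds precisely to the Kodaira--Spencer class $\theta_s$. This compatibility between the canonical Higgs data carried by the $p$-curvature of $\nabla_{GM}$ and the Hodge grading is what makes the Hodge flag the intrinsic Higgs-theoretic filtration singled out by Langer's procedure. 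This identification is the one nontrivial step in the plan.

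Once $F_{can}=F_{Hodge}$ is granted, one concludes
$$B(E_s,\theta_s)=\Lambda C^{-1}(E_s,\theta_s)=Gr_{F_{can}}(H^*_{dR},\nabla_{GM})=Gr_{F_{Hodge}}(H^*_{dR},\nabla_{GM})=(E_s,\theta_s),$$
for every closed point $s$ of $\Spec A$ of large residue characteristic, which is exactly the definition of $(E,\theta)$ being stationary.
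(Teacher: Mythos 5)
The paper never actually writes out a proof of this proposition --- it only remarks that the key input is \cite[thm 3.8]{ov} and defers the details --- so your overall strategy (spread out, identify $C^{-1}(E_s,\theta_s)$ with the Gauss--Manin connection on $H^*_{dR}(Y_s/X_s)$ via \cite[thm 3.8]{ov}, then compare $\Lambda$ with the Hodge filtration) is indeed the intended one. But the step you yourself flag as ``the one nontrivial step'' is a genuine gap, and it aims at the wrong target. You propose to prove the equality of filtrations $F_{can}=F_{Hodge}$. The justification you sketch mischaracterizes Langer's construction: the canonical filtration of \cite[thm 5.5]{langerD} is produced by iterated destabilizing modifications of associated graded Higgs sheaves (Simpson's procedure transported to characteristic $p$), not by ``Harder--Narasimhan decompositions on the $p$-curvature,'' so the claimed compatibility with the Hodge grading does not follow from anything you cite; and there is in any case no reason to expect $F_{can}$ to coincide with the Hodge filtration on the nose.

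What you actually need is weaker and is what Langer's theory provides: any two Griffiths-transverse filtrations on a semistable flat bundle whose associated graded Higgs sheaves are semistable have \emph{isomorphic} associated gradeds (the uniqueness statement accompanying theorem \ref{thm:langer}, modeled on Simpson's characteristic-zero result). For $p\gg 0$ the Hodge filtration on $H^*_{dR}(Y_s/X_s)$ is Griffiths transverse with locally free semistable associated graded equal to $(E_s,\theta_s)$, so this uniqueness yields $\Lambda C^{-1}(E_s,\theta_s)=Gr_{F_{can}}\cong Gr_{F_{Hodge}}=(E_s,\theta_s)$ without ever identifying the filtrations themselves. To make the argument complete you should also record the hypotheses needed to invoke \cite[thm 3.8]{ov} (a lifting of the relevant data modulo $p^2$ and the bound on cohomological degree plus relative dimension against $p$), together with the facts --- valid for almost all closed $s$ by spreading out from characteristic zero --- that the sheaves $R^af_{s*}\Omega^b_{Y_s/X_s}$ are locally free and the Hodge--de Rham spectral sequence of $Y_s/X_s$ degenerates, so that $Gr_{F_{Hodge}}H^*_{dR}$ really is $E_s$.
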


We hope to pursue this question, and some consequences, in more detail elsewhere.

\end{document}